\documentclass[11pt,a4paper,reqno]{amsart}
\usepackage{fancyhdr}
\usepackage{ifpdf}
\usepackage{amsmath,amsfonts,amsthm,amsopn,amssymb}
\usepackage{verbatim,wasysym,mathrsfs}
\usepackage[left=2.6cm,right=2.6cm,top=3.1cm,bottom=3.1cm]{geometry}
\usepackage{cite,marginnote}
\usepackage{color,enumitem,graphicx}
\usepackage[colorlinks=true,urlcolor=blue,citecolor=red,linkcolor=blue,
linktocpage,pdfpagelabels,bookmarksnumbered,bookmarksopen]{hyperref}
\usepackage[english]{babel}
\usepackage[T1]{fontenc}
\usepackage{lmodern}
\usepackage[hyperpageref]{backref}

\newtheorem{Thm}{Theorem}[section]
\newtheorem{Lem}[Thm]{Lemma}

\newtheorem{Cor}[Thm]{Corollary}
\newtheorem{Prop}[Thm]{Proposition}

\theoremstyle{definition}

\newtheorem{Rem}{Remark}[section]

\newcommand{\R}{{\mathbb R}}
\newcommand{\weakto}{{\rightharpoonup}}

\newcommand{\dd}{\,\mathrm{d}}
\newcommand{\norm}[1]{\lVert#1\rVert}

\numberwithin{equation}{section}
\allowdisplaybreaks[2]

\title[Concentration levels and Choquard--Trudinger--Moser extremals]{Sharp concentration levels and extremal functions for a planar Choquard-type Trudinger--Moser inequality}

\author{Jinkai Gao}

\address[Jinkai Gao]{\newline\indent Department of Basic Courses
\newline\indent Beijing University of Agriculture
\newline\indent Beijing, 102206, China.}
    \email{\href{mailto:20269801@bua.edu.cn}{20269801@bua.edu.cn}}

\subjclass[2020]{35J20, 35B33, 46E35, 49J35}

\keywords{Trudinger--Moser inequality, Hardy--Littlewood--Sobolev inequality, Choquard functional, concentration--compactness, extremal function, Green function}

\date{\today}

\begin{document}
\begin{abstract}
Let $\Omega\subset\R^2$ be a smooth bounded domain and let $0<\alpha<2$.
We study the exponential Choquard functional
\[
 \mathcal F_\beta(u)=\int_\Omega\!\int_\Omega
 \frac{e^{\beta u^2(x)}e^{\beta u^2(y)}}{|x-y|^\alpha}\dd x\dd y
\]
on the unit Dirichlet ball of $H_0^1(\Omega)$. We establish the sharp
boundedness threshold $\beta_*=(4-\alpha)\pi$ and prove attainment of
the critical supremum. More precisely, we determine the exact
concentration level at every point of $\overline\Omega$ in terms of
the regular part of the Dirichlet Green function. The concentration
upper bound follows from the classical planar Trudinger--Moser
concentration formula and the sharp Hardy--Littlewood--Sobolev
inequality, after separating the background contribution. A test
function obtained by matching a logarithmic profile to the Green
function attains this bound asymptotically. Its exact Dirichlet
normalization produces a strictly positive lower-order correction,
which rules out concentration of maximizing sequences. We also prove
the Euler--Lagrange equation, positivity, and regularity of extremals.
\end{abstract}
\maketitle

\section{Introduction}\label{sec:intro}

Let $\Omega\subset\R^2$ be a bounded connected domain with smooth
boundary. Throughout the paper, the kernel exponent $\alpha\in(0,2)$
is fixed, all Sobolev functions are real-valued, and
$\norm{u}_{H_0^1(\Omega)}:=\norm{\nabla u}_{L^2(\Omega)}$.
We consider the exponential Riesz interaction
\begin{equation}\label{eq:main-functional}
 \mathcal F_\beta(u):=\int_\Omega\!\int_\Omega
 \frac{e^{\beta u^2(x)}e^{\beta u^2(y)}}{|x-y|^\alpha}\dd x\dd y,
 \qquad u\in H_0^1(\Omega),\quad\beta>0.
\end{equation}
Set
\begin{equation}\label{eq:critical-parameters}
 \beta_*:=(4-\alpha)\pi,\qquad
 q_\alpha:=\frac4{4-\alpha},\qquad q_\alpha\beta_*=4\pi,
\end{equation}
and write
\[
 \mathcal S(\Omega):=\{u\in H_0^1(\Omega):\norm{\nabla u}_2\le1\},
 \qquad
 \Lambda_0:=\sup_{u\in\mathcal S(\Omega)}\mathcal F_{\beta_*}(u).
\]
The subscript $0$ in $\Lambda_0$ denotes the critical supremum,
not the value corresponding to $\beta=0$.

The classical Trudinger--Moser inequality
\cite{Moser,Trudinger-1967} describes the limiting exponential
integrability of planar Sobolev functions. Carleson and Chang
\cite{CarlesonChang1986} proved attainment of the critical supremum
on balls, and Flucher \cite{Flucher1992} extended this result to
general bounded planar domains by relating concentration levels
to the Dirichlet Green function. Lin \cite{Lin1996} established the
corresponding higher-dimensional existence result. Adimurthi and
Sandeep \cite{AdimurthiSandeep2007} introduced singularly weighted
versions of the embedding. Extremals for such local weighted
inequalities were obtained by Csat\'o and Roy \cite{CsatoRoy2015}
and, for a strengthened inequality, by Yang and Zhu
\cite{YangZhu2017}. Wang and Yang \cite{WangYang2021} studied
compactness of extremals as the singular parameter tends to zero.
In these weighted problems, the singularity occurs in a factor
depending on one spatial variable; in \eqref{eq:main-functional},
it occurs in a kernel coupling two variables.

Critical exponential nonlinearities with Riesz convolution also
arise in planar Choquard equations. Alves, Cassani, Tarsi, and Yang
\cite{AlvesCassaniTarsiYang2016} established existence and
concentration of ground state solutions for a critical nonlocal
Schr\"odinger equation in $\R^2$. On bounded domains, Rawat,
R\u{a}dulescu, and Sreenadh \cite{RawatRadulescuSreenadh2024}
studied positive solutions with critical exponential interactions
both in the equation and in a nonlinear Neumann boundary
condition. Their Lemma~2.3(i) provides a threshold estimate for an
exponential Riesz double integral on an $H^1(\Omega)$ unit ball,
with a norm involving Dirichlet, lower-order, and boundary terms.
This is a related boundedness problem, but its boundary conditions
and constraint differ from those considered here. Existence of
solutions to such equations does not by itself establish
maximization of \eqref{eq:main-functional} on $\mathcal S(\Omega)$.

Another closely related line of research concerns logarithmic
convolution kernels. Cingolani and Weth \cite{CingolaniWeth2022}
studied the associated Trudinger--Moser-type maximization problems
on balls and on the whole plane. Cingolani, Weth, and Yu
\cite{CingolaniWethYu2024} analyzed the borderline growth regime,
computed concentration thresholds, and obtained sufficient
conditions for attainment. Higher-dimensional logarithmic-kernel
inequalities were studied by Cannone and Cingolani
\cite{CannoneCingolani2025}; borderline concentration estimates and
extremals on balls are also considered in the preprint of Cannone
and Yu \cite{CannoneYu2026}. The kernel in
\eqref{eq:main-functional} is instead positive and homogeneous,
and our domain is not required to be a disk. We do not assume
radial symmetry of maximizing sequences.

For the present Dirichlet constraint, the boundedness threshold
$\beta_*$ follows by combining the Hardy--Littlewood--Sobolev
(HLS) inequality with the classical Trudinger--Moser inequality,
and its optimality follows from a Moser sequence. The principal
issue is the behavior at the critical threshold: weak convergence
does not in general preserve the nonlocal interaction. Our aim is
to determine the exact concentration level of
$\mathcal F_{\beta_*}$ and to prove that it lies strictly below the
variational supremum. This strict comparison gives attainment and
strong compactness of normalized maximizing sequences.

Our first theorem collects the sharp threshold, attainment, and
the equation and regularity of extremals.
\begin{Thm}[Sharp threshold and attainment]\label{thm:main}
Let $\Omega\subset\R^2$ be a bounded connected smooth domain and
$0<\alpha<2$. Then
\[
 \sup_{u\in\mathcal S(\Omega)}\mathcal F_\beta(u)
 \begin{cases}
 <\infty,&0<\beta\le\beta_*,\\
 =\infty,&\beta>\beta_*.
 \end{cases}
\]
For every $0<\beta\le\beta_*$, the finite supremum is attained by
some nonnegative function $u_\beta\in H_0^1(\Omega)$ with
$\norm{\nabla u_\beta}_2=1$. Every nonnegative maximizer $u_\beta$
is positive in $\Omega$ and satisfies, in the weak sense,
\[
 \begin{cases}
 \displaystyle
 -\Delta u_\beta(x)=\frac1{\lambda_\beta}
 \left(\int_\Omega
 \frac{e^{\beta u_\beta^2(y)}}{|x-y|^\alpha}\dd y\right)
 u_\beta(x)e^{\beta u_\beta^2(x)}
 &\text{in }\Omega,\\[7pt]
 \quad\ \  u_\beta=0&\text{on }\partial\Omega,
 \end{cases}
\]
where
\[
 \lambda_\beta:=\int_\Omega\!\int_\Omega
 \frac{u_\beta^2(x)e^{\beta u_\beta^2(x)}
 e^{\beta u_\beta^2(y)}}{|x-y|^\alpha}\dd x\dd y
 \in(0,\infty).
\]
Every such maximizer belongs to $W^{2,p}(\Omega)$ for every
finite $p>1$, and hence to $C^{1,\gamma}(\overline\Omega)$ for every
$0<\gamma<1$. In particular, there exists $u_*:=u_{\beta_*}$ with
$\norm{\nabla u_*}_2=1$ and $\mathcal F_{\beta_*}(u_*)=\Lambda_0$.
\end{Thm}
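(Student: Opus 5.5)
We first prove the threshold dichotomy, then handle attainment separately in the subcritical and critical cases, and finally derive the Euler--Lagrange equation and regularity.

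\textbf{Finiteness for $\beta\le\beta_*$.} By the sharp HLS inequality with exponents $p=r=q_\alpha=4/(4-\alpha)$, chosen so that $2/p+\alpha/2=2$, we get
\[
\mathcal F_\beta(u)\le C_{\alpha}\,\norm{e^{\beta u^2}}_{L^{q_\alpha}(\Omega)}^2
=C_\alpha\Bigl(\int_\Omega e^{q_\alpha\beta u^2}\Bigr)^{2/q_\alpha}.
\]
Since $q_\alpha\beta\le q_\alpha\beta_*=4\pi$, the classical Trudinger--Moser inequality bounds the right side uniformly on $\mathcal S(\Omega)$.

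\textbf{Blow-up for $\beta>\beta_*$.} Take the Moser sequence $m_k$ concentrated on $B_{\rho}(x_0)\subset\Omega$, equal to $\sqrt{\log k/(2\pi)}$ on $B_{\rho/k}(x_0)$. Restricting both integrals to $B_{\rho/k}\times B_{\rho/k}$ and using $\int\!\int_{B_r\times B_r}|x-y|^{-\alpha}\,dx\,dy\sim c\,r^{4-\alpha}$ gives
\[
\mathcal F_\beta(m_k)\gtrsim k^{2\beta/(2\pi)}\,k^{-(4-\alpha)}=k^{(\beta-\beta_*)/\pi}\to\infty .
\]

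\textbf{Attainment for $\beta<\beta_*$.} Take a maximizing sequence; replacing $u$ by $|u|$, we may assume it is nonnegative. Pass to a subsequence with $u_k\weakto u$ in $H_0^1$ and a.e. Because $\norm{\nabla u_k}_2\le1$ and $\beta<\beta_*$, Trudinger--Moser gives a uniform bound on $e^{\beta u_k^2}$ in $L^{q}$ for some $q>q_\alpha$. Vitali's theorem then yields $e^{\beta u_k^2}\to e^{\beta u^2}$ in $L^{q_\alpha}$, and HLS continuity gives $\mathcal F_\beta(u_k)\to\mathcal F_\beta(u)$. The supremum is not attained in the interior of the ball: if $\norm{\nabla u}_2<1$, rescaling $u/\norm{\nabla u}_2$ strictly increases $\mathcal F_\beta$ unless $u=0$, and $u=0$ cannot be a maximizer because the supremum exceeds $\mathcal F_\beta(0)$. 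Hence $\norm{\nabla u}_2=1$.

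\textbf{Attainment at $\beta=\beta_*$ (main step).} Let $(u_k)$ be a nonnegative maximizing sequence with $u_k\weakto u$. Apply Lions' concentration--compactness in the Choquard form:
\begin{itemize}[leftmargin=*]
\item If $u\ne0$, then $\sup_k\int e^{p\beta_* u_k^2}<\infty$ for some $p>q_\alpha$ (Lions' improvement, since $p\beta_*<4\pi/(1-\norm{\nabla u}^2)$), and the argument above applies.
\item If $u=0$ and no concentration occurs, then $\mathcal F_{\beta_*}(u_k)\to\mathcal F_{\beta_*}(0)=\int\!\int|x-y|^{-\alpha}$, which is strictly below $\Lambda_0$ (test any small multiple of a function).
\item If $u=0$ and $|\nabla u_k|^2\weakto\delta_{x_0}$, I would prove the concentration upper bound
\[
\limsup_k \mathcal F_{\beta_*}(u_k)\le \Lambda_{\mathrm{conc}}(x_0).
\]
To do so, split $e^{\beta u^2}=1+(e^{\beta u^2}-1)$. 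The cross terms converge to background integrals because $e^{\beta u_k^2}-1\to0$ away from $x_0$ in measure. The diagonal concentrated term is controlled by sharp HLS plus Flucher's formula $\limsup\int(e^{4\pi u_k^2}-1)\le\pi e^{1+4\pi A(x_0)}$, where $A$ is the regular part of the Green function; the boundary points give only the background.
\end{itemize}
The hardest step is then to show $\Lambda_0>\sup_{x_0\in\overline\Omega}\Lambda_{\mathrm{conc}}(x_0)$. For this I would construct, at a maximum point of $A$, a test function $\phi_\varepsilon$: a Carleson--Chang/Flucher logarithmic profile on $B_{R\varepsilon}$ matched to $c^{-1}G(\cdot,x_0)$ outside, with $c$ fixed by exact Dirichlet normalization. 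I would then expand $\mathcal F_{\beta_*}(\phi_\varepsilon)$. The leading term reproduces $\Lambda_{\mathrm{conc}}$, and the background term $\int\!\int_{\Omega\times\Omega}e^{\beta_* G^2/c^2}|x-y|^{-\alpha}$ contributes a strictly positive $O(1/\log(1/\varepsilon))$ correction, which must dominate the $o(1/\log)$ error terms. If this fails, a Carleson--Chang style radial ODE comparison on the profile would be the fallback.

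\textbf{Euler--Lagrange equation and regularity.} Use Lagrange multipliers on the sphere $\norm{\nabla u}_2=1$. Differentiating gives $-\Delta u=\mu\,(|x|^{-\alpha}*e^{\beta u^2})\,u\,e^{\beta u^2}$, and testing with $u$ identifies $\mu=1/\lambda_\beta$. Positivity follows from the strong maximum principle, since the right-hand side is nonnegative and $u\not\equiv0$. For regularity, $u\in H_0^1$ gives $e^{\beta u^2}\in L^s$ for all $s$ by Trudinger--Moser applied to $u$ itself (tail argument: $u=u_1+u_2$ with $u_2$ bounded and $\norm{\nabla u_1}$ small). Hence the convolution is bounded by HLS/Hölder, the right-hand side lies in every $L^p$, and Calder\'on--Zygmund yields $W^{2,p}$, then $C^{1,\gamma}$ by Morrey's embedding.
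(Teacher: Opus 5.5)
Your outline is correct and has the same architecture as the paper. Your use of Lions' improved exponent when $u\ne0$ is an equivalent substitute for Lemma~\ref{lem:local-alternative}. The one genuine difference is where you get the strictly positive $1/\log(1/\varepsilon)$ gain in the test-function step.

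The paper discards the outer region, using only $e^{\beta_*W_\tau^2}\ge1$ there. It takes the gain from the core: the nonnegative remainder $(b_\tau+\theta_R)^2/(Q_\tau-\theta_R)$ in the exponent stays bounded below near the center, because the normalization defect $\theta_R$ tends to $1/(4\pi)$.

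You instead use the classical Carleson--Chang/Flucher outer term. Write $h=e^{\beta_*W^2}\ge1$ and $E=\Omega\setminus B_\rho(x_0)$. Use $h(x)h(y)\ge h(x)+h(y)-1$ on $E\times E$ and $h-1\ge\beta_*W^2=\beta_*G^2/c^2$ on $E$. This gives
\[
 \mathcal F_{\beta_*}(W)\ge I_\alpha(\Omega)+J^{\mathrm{core}}
 -\iint_{B_\rho(x_0)\times B_\rho(x_0)}\frac{\dd x\dd y}{|x-y|^{\alpha}}
 +\frac{2\beta_*}{c^2}\int_E G^2\,\mathcal I_{\alpha,\Omega}[\mathbf 1_E]\dd x,
\]
and the last term is at least $c'/\log(1/\varepsilon)$. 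This is valid. What each route buys: yours lets you bound the core's exponential factor below simply by $1$, while the paper's needs no information about $G$ away from the pole.

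In both routes the real work is the estimate $J^{\mathrm{core}}\ge K_\alpha(x_0)-o(1/\log(1/\varepsilon))$, which your ``must dominate'' leaves unchecked.
\begin{itemize}
\item It requires $R\to\infty$ with $R^{-(4-\alpha)/2}$, $R^{-2}$ and $R\varepsilon$ all $o(1/\log(1/\varepsilon))$. The paper takes $R=(\log(1/\varepsilon))^2$.
\item It holds because every profile $-\frac1{4\pi}\log(1+\kappa|z|^2)$ makes $e^{2\beta_*\varphi}$ an HLS extremal. So your Carleson--Chang profile is the paper's $\varphi$ up to rescaling, and no ODE fallback is needed.
\end{itemize}

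Two minor repairs are needed.
\begin{itemize}
\item The vanishing of the mixed term $\mathcal B_{\alpha,\Omega}(1,e^{\beta_*u_k^2}-1)$ needs uniform integrability, via an $L^s$ bound with $s\beta_*<4\pi$ and Vitali. Convergence in measure away from $x_0$ is not enough.
\item The Lagrange multiplier step at $\beta=\beta_*$ needs $\mathcal F_{\beta_*}\in C^1$ near the maximizer. This follows from the neighborhood form of your tail argument (Lemmas~\ref{lem:fixed-function} and~\ref{lem:differentiability}).
\end{itemize}
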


Our second theorem quantifies the loss of compactness at the
critical exponent. A sequence $(u_n)\subset\mathcal S(\Omega)$
is said to \emph{concentrate at $x_0\in\overline\Omega$} if
\begin{equation}\label{eq:concentration-definition}
 |\nabla u_n|^2\dd x\weakto\delta_{x_0}
 \quad\text{in }\mathcal M(\overline\Omega),
\end{equation}
where convergence is tested against continuous functions on
$\overline\Omega$. In particular, its Dirichlet energies tend to
one. Define
\begin{equation}\label{eq:concentration-level-definition}
 \mathcal C_\alpha(x_0):=
 \sup\left\{\limsup_{n\to\infty}\mathcal F_{\beta_*}(u_n):
 (u_n)\subset\mathcal S(\Omega)\text{ satisfies }
 \eqref{eq:concentration-definition}\right\}.
\end{equation}
The background value is
\[
 I_\alpha(\Omega):=\int_\Omega\!\int_\Omega|x-y|^{-\alpha}\dd x\dd y
 =\mathcal F_{\beta_*}(0).
\]
For $x_0\in\Omega$, let $G(\cdot,x_0)$ be the Dirichlet Green
function of $-\Delta$ and write, near its pole,
\begin{equation}\label{eq:Green-expansion}
 G(x,x_0)=-\frac1{2\pi}\log|x-x_0|+A_{x_0}+\eta_{x_0}(x),
 \qquad \eta_{x_0}(x_0)=0,
\end{equation}
where $\eta_{x_0}$ is smooth and harmonic. In the convention
$G(x,y)=-(2\pi)^{-1}\log|x-y|-H(x,y)$, used in
\cite{Flucher1992,CsatoRoy2015}, one has $A_{x_0}=-H(x_0,x_0)$.
Thus the conformal radius is
\begin{equation}\label{eq:conformal-radius}
 r_\Omega(x_0):=e^{2\pi A_{x_0}}\quad(x_0\in\Omega),
 \qquad r_\Omega(x_0):=0\quad(x_0\in\partial\Omega).
\end{equation}
The boundary value is the continuous extension of the interior
definition. The same Green-function definition is used on
multiply connected domains; see \cite[Section~4.2]{Flucher1992}.
Set
\begin{equation}\label{eq:K-alpha}
 K_\alpha(x_0):=\frac{2\pi^2}{2-\alpha}
 e^{(4-\alpha)/2}r_\Omega(x_0)^{4-\alpha},
 \qquad x_0\in\overline\Omega.
\end{equation}

\begin{Thm}[Exact concentration level]\label{thm:concentration}
Let $\Omega\subset\R^2$ be a bounded connected smooth domain and
$0<\alpha<2$. Then
\begin{equation}\label{eq:concentration-formula}
 \mathcal C_\alpha(x_0)=I_\alpha(\Omega)+K_\alpha(x_0)
 \qquad\text{for every }x_0\in\overline\Omega.
\end{equation}
For each $x_0\in\Omega$, there exists a sequence
$(W_n)\subset\mathcal S(\Omega)$, with
$\norm{\nabla W_n}_2=1$, concentrating at $x_0$ such that
\[
 \mathcal F_{\beta_*}(W_n)\longrightarrow
 I_\alpha(\Omega)+K_\alpha(x_0),\qquad
 \mathcal F_{\beta_*}(W_n)>I_\alpha(\Omega)+K_\alpha(x_0)
\]
for all sufficiently large $n$. Moreover,
\begin{equation}\label{eq:global-gap}
 \Lambda_0>\max_{x\in\overline\Omega}\mathcal C_\alpha(x).
\end{equation}
\end{Thm}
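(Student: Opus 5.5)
The proof has three parts: an upper bound on $\mathcal C_\alpha(x_0)$, a test sequence that reaches it from above, and the global gap \eqref{eq:global-gap}, which follows from the first two.

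\emph{Upper bound.} Let $(u_n)$ concentrate at $x_0$. Then $u_n\weakto0$ in $H_0^1(\Omega)$, since any weak limit $u$ has $|\nabla u|^2\,dx$ dominated by $\delta_{x_0}$ and so vanishes. Write $e^{\beta_*u_n^2}=1+f_n$ with $f_n\ge0$ and expand
\[
\mathcal F_{\beta_*}(u_n)=I_\alpha(\Omega)+2\int\!\!\int\frac{f_n(x)}{|x-y|^\alpha}+\int\!\!\int\frac{f_n(x)f_n(y)}{|x-y|^\alpha}.
\]
\begin{itemize}
\item \emph{Cross term.} For $\beta<4\pi$, $e^{\beta u_n^2}$ is bounded in $L^p$ for $p>1$ close to $1$. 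Since $\int e^{4\pi u_n^2}$ stays bounded, we have $f_n\to0$ a.e.\ and $f_n$ is bounded in $L^1$. Concentration compactness in Flucher's form then says $f_n\,dx\weakto c\,\delta_{x_0}$ with $c\le \pi e\, r_\Omega(x_0)^2$. Lions' lemma does not apply directly at the exponent $\beta_*<4\pi$, so we use $\beta_*<4\pi$ and Vitali instead: $e^{\beta_*u_n^2}$ is bounded in $L^{q}$ for $q<q_\alpha$, because $q\beta_*<4\pi$. Hence $f_n\to0$ in $L^1$, and since $\int|x-y|^{-\alpha}\,dy$ is bounded, the cross term tends to zero.
\item \emph{Quadratic term.} Sharp HLS on $\R^2$ with exponent $p=q_\alpha$, for which $2/p+\alpha/2=2$, gives
\[
\int\!\!\int \frac{f_n(x)f_n(y)}{|x-y|^\alpha}\le C_{HLS}(\alpha)\,\|f_n\|_{q_\alpha}^2\le C_{HLS}\Bigl(\int (e^{4\pi u_n^2}-1)\Bigr)^{2/q_\alpha}+o(1),
\]
using $f_n^{q_\alpha}=(e^{\beta_*u_n^2}-1)^{q_\alpha}\le e^{4\pi u_n^2}-1$. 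Flucher's concentration formula bounds $\limsup\int(e^{4\pi u_n^2}-1)$ by $\pi e\,r_\Omega(x_0)^2$. The sharp constant for $p=q_\alpha$ is Lieb's diagonal value $\pi^{\alpha/2}\frac{\Gamma(1-\alpha/2)}{\Gamma(2-\alpha/2)}\bigl(\frac{\Gamma(1)}{\Gamma(2)}\bigr)^{-1+\alpha/2}$. The resulting bound is $C_{HLS}(\pi e r^2)^{(4-\alpha)/2}$, and with this constant it simplifies to $\frac{2\pi^2}{2-\alpha}e^{(4-\alpha)/2}r^{4-\alpha}$, which is $K_\alpha(x_0)$.
\end{itemize}
Before relying on this, I will check the constant explicitly. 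The key consistency check is that the HLS extremal $(1+|x|^2)^{-(4-\alpha)/2}$ has the same shape as the Carleson--Chang bubble profile $e^{4\pi u^2}$. This is exactly why the constant should be sharp. At boundary points $r_\Omega=0$, and the bound reduces to $I_\alpha(\Omega)$.

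\emph{Lower bound and test functions.} For $x_0\in\Omega$, take Carleson--Chang--Flucher test functions. On $B_{\varepsilon R_\varepsilon}(x_0)$ use the bubble
\[
W=c+\frac{1}{4\pi c}\Bigl(-\log\bigl(1+\pi|x-x_0|^2/\varepsilon^2\bigr)+b\Bigr),
\]
and outside it use $G(\cdot,x_0)/c$. The constants $b,c$ are chosen so that $W$ is continuous and $\|\nabla W\|_2=1$. Then compute $\mathcal F_{\beta_*}(W_n)$ in three pieces.
\begin{itemize}
\item The inner double integral is $e^{2\beta_* c^2}\varepsilon^{4-\alpha}$ times an HLS integral of the bubble. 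It converges to $K_\alpha(x_0)$ because the bubble is an HLS extremal.
\item The outer contribution tends to $I_\alpha(\Omega)$.
\item The remaining correction is positive and of order $c^{-2}$. It comes from $e^{\beta_*G^2/c^2}-1\ge\beta_*G^2/c^2$ in the cross term, together with the exact normalization expansion.
\end{itemize}
Comparing orders, the inner error terms are $O(c^{-4})$ or exponentially small, so the positive $c^{-2}$ term wins and gives $\mathcal F_{\beta_*}(W_n)>I_\alpha+K_\alpha(x_0)$. This also yields $\mathcal C_\alpha(x_0)=I_\alpha(\Omega)+K_\alpha(x_0)$ at interior points. At a boundary point, concentrating sequences exist (for example, by taking interior points converging to the boundary together with a diagonal argument), and any concentrating sequence converges weakly to $0$. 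Combining this with the upper bound gives exactly $I_\alpha(\Omega)$.

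\emph{Global gap.} $K_\alpha$ is continuous on $\overline\Omega$, vanishes on the boundary and is positive inside, so the maximum of $\mathcal C_\alpha$ is attained at some interior $x_1$. Then $\Lambda_0\ge\mathcal F_{\beta_*}(W_n)>\mathcal C_\alpha(x_1)$, which is \eqref{eq:global-gap}.

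\emph{Main obstacle.} I expect the hardest step to be the precise expansion for the test functions: tracking the constants so that the leading term matches $K_\alpha$ exactly, and showing that the $c^{-2}$ correction (from the background interaction and the normalization) has a definite positive sign that beats every error term.
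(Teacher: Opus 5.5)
Your upper bound, the boundary case and the derivation of \eqref{eq:global-gap} are the paper's argument. That is: background splitting with Vitali via $\beta_*<4\pi$, sharp HLS with $(e^{\beta_*u_n^2}-1)^{q_\alpha}\le e^{4\pi u_n^2}-1$, Flucher's bound, and $C_{2,\alpha}=2\pi^{\alpha/2}/(2-\alpha)$. Delete the sentence asserting $f_n\dd x\weakto c\,\delta_{x_0}$ for $f_n=e^{\beta_*u_n^2}-1$. That statement belongs to $e^{4\pi u_n^2}-1$, and it contradicts the $L^1$ convergence you prove next.

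The genuine difference is where the strict excess of the test sequence comes from. The paper writes the normalized inner function as $W_\tau=(Q_\tau+b_\tau)/\sqrt{Q_\tau-\theta_R}$, so that exactly $W_\tau^2=Q_\tau+2b_\tau+\theta_R+(b_\tau+\theta_R)^2/(Q_\tau-\theta_R)$. The last term is at least $m_0^2/Q_\tau$ on a fixed core $B_{s_0}$ and yields the gain $K_\alpha(x_0)a_0/Q_\tau$ inside $B_\rho\times B_\rho$; all other blocks are bounded below by the background alone. You drop that nonnegative core term and take the classical Carleson--Chang/Flucher gain from the tail instead. Bound the mixed core--tail blocks below by their background values and use $e^t\ge1+t$ in the exterior block. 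This gives an excess $2\beta_*c^{-2}\int_\Omega G^2\,\mathcal I_{\alpha,\Omega}[1]\dd x+o(c^{-2})$, also of exact order $c^{-2}$ with a positive coefficient. Both mechanisms are valid, and in fact they add. The paper keeps all bookkeeping in the rescaled core. Yours gives the excess an explicit geometric coefficient, as in the local theory.

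Two points of your construction must be repaired.

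First, a radial inner bubble cannot be glued continuously to $G(\cdot,x_0)/c$. On $\partial B_\rho(x_0)$ one has $G=s_\rho+\eta_{x_0}$, and $\eta_{x_0}$ is not constant there in general, so no choice of $b,c$ puts $W$ in $H_0^1(\Omega)$. You can add $\eta_{x_0}/c$ to the inner piece as the paper does. The mean-value property then preserves the exact identity \eqref{eq:exact-normalization}, at the cost of a factor $1-C\rho$ in the core. Alternatively, cut $\eta_{x_0}$ off in an annulus, at cost $O(\rho)$.

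Second, you must specify $R_\varepsilon$. Three errors must all be $o(c^{-2})=o(1/\log(1/\varepsilon))$: the prefactor error $O(R_\varepsilon^{-2})$, the relative HLS truncation error, and $\rho=\varepsilon R_\varepsilon$. The choice $R_\varepsilon=\log(1/\varepsilon)$ works. The estimate \eqref{eq:profile-tail} gives a truncation error $O(c^{-(4-\alpha)})$, which is $o(c^{-2})$ precisely because $\alpha<2$. The decay $\mathcal I_{\alpha,\R^2}[f](z)\le C(1+|z|)^{-\alpha}$ improves this to $O(R_\varepsilon^{-2})$. A slowly growing $R_\varepsilon$, however, would let the truncation error swamp your $c^{-2}$ gain. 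So your claim that all inner errors are ``$O(c^{-4})$ or exponentially small'' holds only after such a choice.
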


Formula~\eqref{eq:concentration-formula} separates the constant
background from the excess interaction created by concentration.
It applies to arbitrary concentrating sequences, not only to
sequences satisfying an Euler--Lagrange equation. In particular,
concentration at a boundary point leaves only the background value
$I_\alpha(\Omega)$. The role of the domain geometry is encoded by
$r_\Omega(x_0)^{4-\alpha}$, and the test sequence proves that the
coefficient in this formula cannot be decreased.

The critical attainment assertion in Theorem~\ref{thm:main}
follows from Theorem~\ref{thm:concentration} and the local
concentration--compactness alternative. We therefore prove
Theorem~\ref{thm:concentration} before completing
Theorem~\ref{thm:main}. The argument follows the classical strategy
of comparing the supremum with concentration levels
\cite{Flucher1992,Lin1996,CsatoRoy2015}, with two points specific
to the present interaction. First, the background must be removed
before applying the sharp HLS inequality. For a weakly vanishing
sequence in the unit Dirichlet ball, the increment
$e^{\beta_*u_n^2}-1$ tends to zero in $L^1(\Omega)$, so its mixed
interaction with the constant density vanishes. Its nonlocal
quadratic interaction need not vanish. The sharp HLS inequality
bounds that term by a power of the local increment
$\int_\Omega(e^{4\pi u_n^2}-1)\dd x$. Flucher's local concentration
formula then yields the upper bound with the precise constant in
\eqref{eq:K-alpha}.

Second, a test sequence matching the upper bound at leading order
is not enough to prove the strict gap. We match a logarithmic
profile to the Dirichlet Green function so that the leading
rescaled inner density has the shape of an HLS equality function.
Exact Dirichlet normalization produces a strictly positive
lower-order correction, and the scales are chosen so that all
matching and truncation errors are of smaller order.
This establishes both the sharpness of the concentration bound
and \eqref{eq:global-gap}. Concentration of maximizing sequences
is thereby excluded, and local compactness transfers to the
nonlocal functional through the HLS inequality. No classification
of entire solutions of a nonlocal equation is needed.

The paper is organized as follows.
Section~\ref{sec:prelim} collects the analytic preliminaries,
including the classical local concentration results.
Section~\ref{sec:sharp-compactness} proves the sharp exponential
threshold, the transfer of local compactness, and the background
splitting. Section~\ref{sec:concentration-upper} establishes the
upper bound for arbitrary concentrating sequences.
Section~\ref{sec:test} constructs the test sequence and proves
Theorem~\ref{thm:concentration}. Finally,
Section~\ref{sec:attainment} proves attainment and strong
compactness of normalized maximizing sequences, and derives
the Euler--Lagrange equation, positivity, and regularity.

\section{Preliminaries}\label{sec:prelim}

We write $\norm{\cdot}_p$ for the norm in $L^p(\Omega)$
unless another domain is specified.
For $r>0$ and $x\in\R^2$, let $B_r(x)$ denote the open
ball of radius $r$ centered at $x$, and write $B_r:=B_r(0)$.
The notation $|E|$ denotes the Lebesgue measure of a
measurable set $E$, and $\mathbf1_E$ denotes its
characteristic function.
Constants denoted by $C$ may change from line to line.
They may depend on the fixed exponent $\alpha$ and on
$\Omega$; any additional dependence is indicated as needed.
No estimate is asserted to be uniform as $\alpha$
approaches an endpoint of $(0,2)$.

\subsection{The sharp Hardy--Littlewood--Sobolev inequality}

For $D=\Omega$ or $\R^2$, define
\[
 \mathcal B_{\alpha,D}(f,g):=
 \int_D\!\int_D\frac{f(x)g(y)}{|x-y|^\alpha}\dd x\dd y,
 \qquad
 \mathcal I_{\alpha,D}[f](x):=
 \int_D\frac{f(y)}{|x-y|^\alpha}\dd y.
\]
We omit $D$ only when the domain is unambiguous. When applying a
whole-space inequality to a density $h$ on $\Omega$, its zero
extension is $\mathbf1_\Omega h$. In particular, the zero extension
of $e^{\beta u^2}$ is $\mathbf1_\Omega e^{\beta u^2}$, not the
exponential of the zero extension of $u$.

\begin{Lem}[Sharp planar HLS inequality]\label{lem:hls}
\textup{\cite{Lieb1983,Lieb2001}}
For $q_\alpha$ as in \eqref{eq:critical-parameters} and
$f,g\in L^{q_\alpha}(\R^2)$,
\begin{equation}\label{eq:hls}
 |\mathcal B_{\alpha,\R^2}(f,g)|
 \le C_{2,\alpha}\norm{f}_{L^{q_\alpha}(\R^2)}
 \norm{g}_{L^{q_\alpha}(\R^2)},
 \qquad
 C_{2,\alpha}=\frac{2\pi^{\alpha/2}}{2-\alpha}.
\end{equation}
The constant is optimal. For nonzero nonnegative $f,g$, equality
holds precisely when
\[
 f(x)=a(\gamma^2+|x-\xi|^2)^{-(4-\alpha)/2},\qquad
 g(x)=b(\gamma^2+|x-\xi|^2)^{-(4-\alpha)/2},
\]
for some $a,b,\gamma>0$ and $\xi\in\R^2$.
\end{Lem}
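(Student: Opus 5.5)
This is the classical sharp Hardy--Littlewood--Sobolev inequality of Lieb in the planar case $n=2$, $\lambda=\alpha$. In the paper it would simply be cited, but here is how I would recover the stated form from Lieb's theorem, including the constant and the equality cases.

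The first step is the diagonal case. For $p=q=2n/(2n-\lambda)$, Lieb's general theorem gives the sharp constant
\[
C(n,\lambda)=\pi^{\lambda/2}\frac{\Gamma(n/2-\lambda/2)}{\Gamma(n-\lambda/2)}\left(\frac{\Gamma(n/2)}{\Gamma(n)}\right)^{-1+\lambda/n}.
\]
Specializing to $n=2$ gives the exponent $p=4/(4-\alpha)=q_\alpha$. The gamma factors become $\Gamma(1-\alpha/2)/\Gamma(2-\alpha/2)=1/(1-\alpha/2)=2/(2-\alpha)$, since $\Gamma(x+1)=x\Gamma(x)$. The last factor is $(\Gamma(1)/\Gamma(2))^{\cdots}=1$. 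Hence $C=2\pi^{\alpha/2}/(2-\alpha)=C_{2,\alpha}$, as claimed.

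The proof of the inequality with this constant would follow Lieb's route:
\begin{enumerate}
\item Use $|\mathcal B(f,g)|\le\mathcal B(|f|,|g|)$ to reduce to nonnegative $f,g$.
\item Apply Riesz's rearrangement inequality to reduce further to symmetric decreasing functions.
\item Prove the existence of an optimizer by compactness modulo the dilation and translation symmetries.
\item Exploit conformal (inversion) invariance, available precisely in the diagonal case, through stereographic projection to $\mathbb S^2$. There one shows that an optimizer must be invariant under the full conformal group up to the natural weight, which forces $f,g$ to be multiples of $(\gamma^2+|x-\xi|^2)^{-(4-\alpha)/2}$.
\item Evaluate $\mathcal B$ on $f=g=(1+|x|^2)^{-(4-\alpha)/2}$ to get the constant; on the sphere this is an explicit beta-type integral.
\end{enumerate}

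For the equality cases in the stated form, start from the fact that equality forces $|f|$, $|g|$ to be equality cases for Riesz's inequality with a strictly decreasing kernel. Hence they are translates of symmetric decreasing functions with a common center. Lieb's uniqueness argument then identifies them, as a pair, as the above profiles with common $\gamma$ and $\xi$ and arbitrary positive multiples $a,b$.

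I expect the main obstacle to be the uniqueness and characterization step, that is, the competing-symmetries or conformal-invariance argument. This is exactly the content of Lieb's paper, so in practice I would cite \cite{Lieb1983} and \cite[Thm.~4.3]{Lieb2001} for it. The only genuinely new work here is checking the specialization of the constant to $n=2$, done in the first step.
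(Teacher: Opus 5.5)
Your proposal is correct and takes the same approach as the paper, which gives no proof and simply cites Lieb's sharp HLS theorem (Lieb 1983; Lieb--Loss, Theorem~4.3). Your specialization to $n=2$, $\lambda=\alpha$ is right: the diagonal exponent $2n/(2n-\lambda)$ equals $q_\alpha$, the constant reduces to $\pi^{\alpha/2}\Gamma(1-\alpha/2)/\Gamma(2-\alpha/2)=2\pi^{\alpha/2}/(2-\alpha)=C_{2,\alpha}$, and the extremals $(\gamma^2+|x-\xi|^2)^{-(2n-\lambda)/2}$, with $g$ a positive multiple of $f$, are exactly the stated profiles.
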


The same estimate with $D=\Omega$ follows by zero extension.
Since $0<\alpha<2$ and $\Omega$ is bounded, we also have
\begin{equation}\label{eq:background-potential}
 M_{\alpha,\Omega}:=
 \sup_{y\in\Omega}\int_\Omega|x-y|^{-\alpha}\dd x<\infty.
\end{equation}
Indeed, after translating $y$ to the origin, the domain is contained
in $B_{\operatorname{diam}\Omega}(0)$, and polar integration
bounds the integral by
$2\pi(\operatorname{diam}\Omega)^{2-\alpha}/(2-\alpha)$.

\subsection{Exponential integrability}

\begin{Lem}[Trudinger--Moser inequality]\label{lem:mt}
\textup{\cite{Moser,Trudinger-1967}}
There exists an absolute constant $C_{\mathrm{MT}}>0$ such that
for every bounded open set $D\subset\R^2$,
\begin{equation}\label{eq:mt}
 \sup_{\substack{v\in H_0^1(D)\\\norm{\nabla v}_{L^2(D)}\le1}}
 \int_D e^{4\pi v^2}\dd x\le C_{\mathrm{MT}}|D|.
\end{equation}
The exponent $4\pi$ is sharp. The same upper bound holds with
$4\pi$ replaced by any smaller positive coefficient.
\end{Lem}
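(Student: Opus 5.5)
The inequality is classical and the paper only cites it, but I would prove it along Moser's original route: symmetrize, reduce to a one-dimensional variational inequality, then scale. \textbf{Reduction.} Let $v\in H_0^1(D)$ with $\norm{\nabla v}_{L^2(D)}\le1$. Replacing $v$ by $|v|$ changes neither side, so assume $v\ge0$. Let $D^*$ be the disk centered at $0$ with $|D^*|=|D|$, radius $R$, and let $v^*$ be the Schwarz symmetrization of $v$. Then $v^*\in H_0^1(D^*)$ is radial and nonincreasing. By equimeasurability, $\int_D e^{4\pi v^2}\dd x=\int_{D^*}e^{4\pi (v^*)^2}\dd x$. By the P\'olya--Szeg\H{o} inequality, $\norm{\nabla v^*}_2\le\norm{\nabla v}_2\le1$. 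So it suffices to treat radial nonincreasing $v$ on the disk $B_R$.

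\textbf{Change of variables.} Set $|x|^2=R^2e^{-t}$ with $t\in[0,\infty)$, and $w(t):=\sqrt{4\pi}\,v(x)$. A direct computation gives
\[
\int_{B_R}|\nabla v|^2\dd x=\int_0^\infty \dot w(t)^2\dd t\le1,\qquad \int_{B_R}e^{4\pi v^2}\dd x=\pi R^2\int_0^\infty e^{w(t)^2-t}\dd t,
\]
with $w(0)=0$ and $w$ nondecreasing. The factor $\pi R^2=|D|$ comes out exactly, so scaling invariance is built in. The claim is then reduced to Moser's lemma: there is an absolute constant $C_0$ such that
\[
\int_0^\infty e^{w^2-t}\dd t\le C_0
\]
for every absolutely continuous $w$ with $w(0)=0$ and $\int_0^\infty\dot w^2\le1$. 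This gives $C_{\mathrm{MT}}=C_0$.

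\textbf{Moser's lemma (main obstacle).} Cauchy--Schwarz gives $w(t)^2\le t\int_0^t\dot w^2\le t$, so $w^2-t\le0$ pointwise. This alone is not integrable; one needs a bound on the measure of the sets where $w^2-t$ is nearly $0$. I would follow Moser's argument.
\begin{enumerate}
\item For $\lambda\ge0$, let $E_\lambda=\{t: w(t)^2\ge t-\lambda\}$. The goal is $|E_\lambda|\le C(1+\lambda)$, since then
\[
\int_0^\infty e^{w^2-t}\dd t=\int_0^\infty |E_\lambda|e^{-\lambda}\dd\lambda\le C\int_0^\infty(1+\lambda)e^{-\lambda}\dd\lambda<\infty.
\]
\item Fix $\tau\in E_\lambda$ and put $\delta(\tau)=\int_\tau^\infty\dot w^2$.
\item For $t\ge\tau$, apply Cauchy--Schwarz on $[0,\tau]$ and $[\tau,t]$ separately: $w(t)\le \sqrt{\tau(1-\delta(\tau))}+\sqrt{(t-\tau)\delta(\tau)}$.
\item Since $\tau\in E_\lambda$, we have $\tau-\lambda\le w(\tau)^2\le\tau(1-\delta(\tau))$, so $\tau\delta(\tau)\le\lambda$.
\item Feed this back into the estimate for $w(t)^2-t$ when $t$ is much larger than $\tau$. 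A completing-the-square computation then shows that if $t\in E_\lambda$ and $t\ge\tau$, then $t\le C(\tau+\lambda+1)$ with $\tau$ tied to $\lambda$ through $\tau\delta(\tau)\le\lambda$.
\item Taking $\tau$ to be the first point of $E_\lambda$ beyond a fixed multiple of $1+\lambda$ bounds the diameter of $E_\lambda$, hence its measure, by $C(1+\lambda)$.
\end{enumerate}
This bookkeeping is the only delicate step, and I would follow Moser's original inequalities closely.

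\textbf{Sharpness and smaller coefficients.} For sharpness, use the Moser sequence on a disk $B_\rho(x_0)\subset D$. This corresponds to $w_n(t)=t/\sqrt n$ for $t\le n$ and $w_n(t)=\sqrt n$ for $t\ge n$. These satisfy $\int_0^\infty\dot w_n^2=1$, and for $\beta=4\pi(1+\varepsilon)$ they give $\int e^{\beta v_n^2}\ge \pi\rho^2 e^{(1+\varepsilon)n-n}\to\infty$. Finally, for $0<\beta\le4\pi$ we have $e^{\beta v^2}\le e^{4\pi v^2}$ pointwise, so the same bound $C_{\mathrm{MT}}|D|$ holds for every smaller positive coefficient.
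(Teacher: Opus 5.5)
Your route is Moser's original argument, which is exactly what the paper relies on: it gives no proof of this lemma and only cites \cite{Moser,Trudinger-1967}. The symmetrization reduction, the substitution $|x|^2=R^2e^{-t}$ (including the exact factor $\pi R^2=|D|$), the layer-cake identity over $E_\lambda$, the inequality $\tau\delta(\tau)\le\lambda$, the Moser sequence for sharpness, and the remark on smaller coefficients are all correct.

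\textbf{The gap is in steps 5--6, the one step that carries the lemma.} The bound you state in step 5, $t\le C(\tau+\lambda+1)$, is too weak. The claim in step 6 that the \emph{diameter} of $E_\lambda$ is $O(1+\lambda)$ is false. Take your own sharpness profile, $w_n(t)=t/\sqrt n$ for $t\le n$ and $w_n=\sqrt n$ for $t\ge n$. Then $w_n^2-t=-t(1-t/n)$ on $[0,n]$ and $w_n^2-t=n-t$ on $[n,\infty)$. So for fixed $\lambda$ and large $n$, $E_\lambda$ consists of a piece of length about $\lambda$ near $t=0$ and a piece of length about $2\lambda$ near $t=n$. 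Its diameter is about $n$, while its measure stays $O(\lambda)$. The first point $\tau$ of $E_\lambda$ beyond $c(1+\lambda)$ can therefore lie arbitrarily far out. Any estimate of the form $t\le C\tau+\cdots$ with $C>1$ gives no control of $|E_\lambda\cap[\tau,\infty)|$. What you need is an \emph{additive} bound on the tail, and what gets bounded is the measure, not the diameter.

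\textbf{The fix uses only your steps 3--4, via an exact completion of the square.} For $t\ge\tau$ and $\delta=\delta(\tau)$, expanding $\bigl(\sqrt{\tau(1-\delta)}+\sqrt{(t-\tau)\delta}\bigr)^2-t$ gives
\[
 w(t)^2-t\le-\Bigl(\sqrt{(1-\delta)(t-\tau)}-\sqrt{\tau\delta}\Bigr)^2 .
\]
Hence $t\in E_\lambda$ forces $\sqrt{(1-\delta)(t-\tau)}\le\sqrt\lambda+\sqrt{\tau\delta}\le2\sqrt\lambda$, that is, $t-\tau\le4\lambda/(1-\delta)$.

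Now fix $\lambda>0$. Let $\tau$ be the least point of the closed set $E_\lambda\cap[2\lambda,\infty)$; if this set is empty, then $E_\lambda\subset[0,2\lambda)$. Since $\tau\ge2\lambda$, we get $\delta\le\lambda/\tau\le1/2$. Therefore $E_\lambda\subset[0,2\lambda]\cup[\tau,\tau+8\lambda]$, so $|E_\lambda|\le10\lambda$. Your layer-cake identity then yields $\int_0^\infty e^{w^2-t}\dd t\le10$, so $C_{\mathrm{MT}}=10$ works. With this replacement for steps 5--6, the proposal is a complete proof of the cited lemma.
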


The following elementary consequence will justify nonlinear
integrability and differentiation at a fixed Sobolev function.
It does not assert uniform bounds on the full unit ball above the
critical exponent.
\begin{Lem}[Local bounds around a fixed function]\label{lem:fixed-function}
For every $u\in H_0^1(\Omega)$ and $b>0$, there exist
$d=d(u,b)>0$ and $C=C(u,b)>0$ such that
\begin{equation}\label{eq:fixed-neighborhood}
 \sup_{\norm{\nabla(v-u)}_2\le d}
 \int_\Omega e^{b v^2}\dd x\le C.
\end{equation}
In particular, $e^{b u^2}\in L^1(\Omega)$ for every fixed $b>0$.
\end{Lem}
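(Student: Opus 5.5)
The plan is to split $v=u+w$ with a cutoff decomposition of $u$ into a bounded part and a part of small Dirichlet norm, and then use Lemma~\ref{lem:mt} on the small part. Fix $u\in H_0^1(\Omega)$ and $b>0$. By density of $C_c^\infty(\Omega)$ in $H_0^1(\Omega)$, I choose $\varphi\in C_c^\infty(\Omega)$ with $\norm{\nabla(u-\varphi)}_2\le\varepsilon$, where $\varepsilon>0$ will be fixed below depending only on $b$. For $v$ with $\norm{\nabla(v-u)}_2\le d$, I write $v=\varphi+z$ with $z:=v-\varphi\in H_0^1(\Omega)$ and $\norm{\nabla z}_2\le\varepsilon+d$.

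The elementary inequality $(s+t)^2\le(1+\theta)t^2+(1+\theta^{-1})s^2$ with $\theta=1$ gives
\[
 b v^2\le 2b z^2+2b\norm{\varphi}_\infty^2 .
\]
Hence
\[
 \int_\Omega e^{bv^2}\dd x\le e^{2b\norm{\varphi}_\infty^2}\int_\Omega e^{2bz^2}\dd x .
\]
Now I pick $\varepsilon=d$ with $2b(2d)^2\le4\pi$, that is, $d:=\sqrt{\pi/(2b)}/2$. If $z\neq0$, set $\tilde z:=z/\norm{\nabla z}_2$. Then $2bz^2=2b\norm{\nabla z}_2^2\tilde z^2\le 2b(2d)^2\tilde z^2\le4\pi\tilde z^2$. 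By Lemma~\ref{lem:mt} (the version with coefficient at most $4\pi$) this yields $\int_\Omega e^{2bz^2}\dd x\le C_{\mathrm{MT}}|\Omega|$. The case $z=0$ is trivial.

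Combining the two estimates gives \eqref{eq:fixed-neighborhood} with
\[
 C:=e^{2b\norm{\varphi}_\infty^2}C_{\mathrm{MT}}|\Omega| .
\]
Here $\varphi$ depends on $u$ and $b$ through $\varepsilon=d(b)$, so $C=C(u,b)$ and $d=d(b)$ as required. The final assertion is the special case $v=u$.

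No step is a real obstacle. The only point needing care is the order of choices: $d$ must be fixed from $b$ alone before $\varphi$ is chosen, so that the smallness of $\norm{\nabla z}_2$ is uniform over the whole neighborhood. Boundedness of $\varphi$ then absorbs the remaining dependence on $u$ into the constant.
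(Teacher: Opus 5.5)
Your proof is correct and is essentially the paper's argument. The only difference is that the paper takes the bounded part to be the truncation $u_M=\max\{-M,\min\{u,M\}\}$ rather than a smooth approximant $\varphi$. The splitting of $v$ into a bounded part plus a remainder of Dirichlet norm at most $2d$, the choice $2b(2d)^2\le4\pi$, and the application of Lemma~\ref{lem:mt} are the same.
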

\begin{proof}

For $M>0$, define $u_M:=\max\{-M,\min\{u,M\}\}$ and $z_M:=u-u_M$.
Then $u_M,z_M\in H_0^1(\Omega)$, $|u_M|\le M$, and
\[
 \norm{\nabla z_M}_2^2
 =\int_{\{|u|>M\}}|\nabla u|^2\dd x\longrightarrow0\qquad\text{as }M\to+\infty.
\]
Choose $d>0$ so small that $2b(2d)^2\le4\pi$, and then choose
$M>0$ such that $\norm{\nabla z_M}_2\le d$.
If $\norm{\nabla(v-u)}_2\le d$, the function
$w:=z_M+v-u$ satisfies $\norm{\nabla w}_2\le2d$, and
$v=u_M+w$. Hence
\[
 \int_\Omega e^{b v^2}\dd x
 \le e^{2bM^2}\int_\Omega e^{2b w^2}\dd x
 \le e^{2bM^2}C_{\mathrm{MT}}|\Omega|
\]
by \eqref{eq:mt}. This proves \eqref{eq:fixed-neighborhood}.
\end{proof}

\subsection{Local concentration and Green functions}

The next lemma concerns only the \emph{local} exponential density.
The corresponding alternative for the nonlocal functional will be
derived separately.
\begin{Lem}[Local concentration--compactness alternative]
\label{lem:local-alternative}
\textup{\cite[Theorem~1]{Flucher1992}; see also
\cite{Lions-1985-REV} and \cite[Theorem~6]{CsatoRoy2015}}
For every sequence $(u_n)\subset\mathcal S(\Omega)$, a subsequence
converges weakly to $u\in\mathcal S(\Omega)$ and almost everywhere
in $\Omega$, and at least one of the following conclusions holds:
\begin{enumerate}[label=\textup{(\roman*)},leftmargin=*]
\item $(u_n)$ concentrates at some $x_0\in\overline\Omega$ and $u=0$;
\item $e^{4\pi u_n^2}\to e^{4\pi u^2}$ strongly in $L^1(\Omega)$.
\end{enumerate}
The two conclusions are not asserted to be mutually exclusive.
\end{Lem}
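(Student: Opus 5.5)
The plan follows Lions' concentration--compactness scheme: first extract a defect measure for the Dirichlet energy, then split into two cases according to whether that measure is a single unit Dirac mass.

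\textbf{Step 1: compactness and the defect measure.} Since $(u_n)$ is bounded in $H_0^1(\Omega)$, I pass to a subsequence along which:
\begin{enumerate}[label=\textup{(\alph*)},leftmargin=*]
\item $u_n\weakto u$ weakly in $H_0^1(\Omega)$;
\item $u_n\to u$ strongly in $L^p(\Omega)$ for every finite $p$, by Rellich in two dimensions;
\item $u_n\to u$ almost everywhere;
\item $|\nabla u_n|^2\dd x\weakto\mu$ in $\mathcal M(\overline\Omega)$, for a nonnegative measure $\mu$ with $\mu(\overline\Omega)\le1$.
\end{enumerate}
By weak lower semicontinuity on open sets, $\mu\ge|\nabla u|^2\dd x$ as measures.

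\textbf{Step 2: the Dirac case.} Suppose $\mu(\{x_0\})=1$ for some $x_0\in\overline\Omega$. The total mass bound then forces $\mu=\delta_{x_0}$, which is exactly \eqref{eq:concentration-definition}. The absolutely continuous measure $|\nabla u|^2\dd x$ is dominated by $\delta_{x_0}$, so it vanishes. Hence $\nabla u=0$, and $u=0$ because $u\in H_0^1(\Omega)$. This gives alternative (i).

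\textbf{Step 3: the non-Dirac case.} Otherwise $\mu(\{x\})<1$ for every $x\in\overline\Omega$. For each $x$ I choose $r_x>0$ and $\delta_x>0$ with $\mu(\overline{B_{2r_x}(x)}\cap\overline\Omega)<1-2\delta_x$. I also shrink $r_x$ so that $\norm{u}_{L^4(B_{2r_x}(x)\cap\Omega)}$ is as small as needed below.

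Take a cutoff $\varphi$ with $\varphi=1$ on $B_{r_x}(x)$, support in $B_{2r_x}(x)$, and $|\nabla\varphi|\le2/r_x$. Then $\varphi u_n\in H_0^1(\Omega\cap B_{2r_x}(x))$, and for every $\varepsilon>0$,
\[
\norm{\nabla(\varphi u_n)}_2^2\le(1+\varepsilon)\int\varphi^2|\nabla u_n|^2\dd x+C_\varepsilon\int u_n^2|\nabla\varphi|^2\dd x.
\]
As $n\to\infty$, the first term tends to at most $(1+\varepsilon)(1-2\delta_x)$. The second term tends to $C_\varepsilon\int u^2|\nabla\varphi|^2\dd x$. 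By H\"older,
\[
\int u^2|\nabla\varphi|^2\dd x\le C\,r_x^{-2}\,|B_{2r_x}|^{1/2}\,\norm{u}_{L^4(B_{2r_x}(x))}^2\le C'\norm{u}_{L^4(B_{2r_x}(x))}^2.
\]
This is small once $r_x$ is small. I fix $\varepsilon$ first and then shrink $r_x$. The main technical point is this ordering of choices, so that the Dirichlet norm of the localized function stays uniformly below one.

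With that ordering, $\norm{\nabla(\varphi u_n)}_2^2\le1-\delta_x$ for all large $n$. Choose $p>1$ with $p(1-\delta_x)\le1$. Applying Lemma~\ref{lem:mt} to $\sqrt{p}\,\varphi u_n$ gives
\[
\int_{B_{r_x}(x)\cap\Omega}e^{4\pi p\,u_n^2}\dd x\le C_{\mathrm{MT}}|\Omega|.
\]

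\textbf{Step 4: globalizing.} Cover the compact set $\overline\Omega$ by finitely many balls $B_{r_{x_i}}(x_i)$. This yields a uniform bound $\sup_n\norm{e^{4\pi u_n^2}}_{L^{p}(\Omega)}<\infty$ for some $p>1$, namely the minimum of the finitely many exponents. The functions $e^{4\pi u_n^2}$ are therefore uniformly integrable on the bounded set $\Omega$, and they converge almost everywhere to $e^{4\pi u^2}$. Vitali's theorem then gives $e^{4\pi u_n^2}\to e^{4\pi u^2}$ in $L^1(\Omega)$, which is alternative (ii).
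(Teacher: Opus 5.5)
\textbf{There is a genuine gap in Step 3: the cutoff error term does not become small when $u\neq0$.}

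Steps 1, 2 and 4 are sound. Splitting according to the atoms of $\mu$ is the right Lions-type framework; the paper itself gives no proof of this lemma and cites Flucher, Lions and Csat\'o--Roy.

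In the plane $|B_{2r_x}|^{1/2}=2\sqrt\pi\,r_x$, so your H\"older bound is really $C r_x^{-1}\norm{u}_{L^4(B_{2r_x}(x))}^2$, not $C'\norm{u}_{L^4(B_{2r_x}(x))}^2$. This is not just a weak estimate:
\begin{itemize}
\item Because $\varphi\in H_0^1(B_{2r_x}(x))$ equals one on $B_{r_x}(x)$, $\int|\nabla\varphi|^2\dd x$ is at least the capacity $2\pi/\log2$, which is scale invariant in two dimensions.
\item Hence, at a Lebesgue point $x$ of $u^2$, one has $\liminf_{r_x\downarrow0}\int u^2|\nabla\varphi|^2\dd x\ge(2\pi/\log2)\,u(x)^2$.
\item Also $\liminf_n\norm{\nabla(\varphi u_n)}_2^2\ge\norm{\nabla(\varphi u)}_2^2=\int u^2|\nabla\varphi|^2\dd x+o(1)$ as $r_x\downarrow0$.
\end{itemize}
So the localized energy itself exceeds one for all small $r_x$ at any point where $u(x)^2>\log2/(2\pi)$, whatever $\varepsilon$ is. Such points exist even when $\norm{\nabla u}_2$ is tiny, for instance for small multiples of the functions $m_\ell$ in the proof of Proposition~\ref{prop:sharp}. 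No ordering of the choices of $\varepsilon$ and $r_x$ rescues this. As written, Step 3 is valid only when $u=0$.

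\textbf{Repair: remove the weak limit before localizing.} Either of the following works.
\begin{itemize}
\item If $u\neq0$, no localization is needed. You have $\limsup_n\norm{\nabla(u_n-u)}_2^2\le1-\norm{\nabla u}_2^2<1$. Combine the splitting $u_n^2\le(1+\varepsilon)(u_n-u)^2+(1+\varepsilon^{-1})u^2$, H\"older's inequality, \eqref{eq:mt} applied to a suitable multiple of $u_n-u$, and Lemma~\ref{lem:fixed-function} for $e^{bu^2}$. This gives $\sup_n\norm{e^{4\pi u_n^2}}_{L^p(\Omega)}<\infty$ for some $p>1$, which is Lions' argument. If $u=0$, your Step 3 works as written, since the cutoff term tends to zero.
\item Alternatively, run Step 3 for $w_n:=u_n-u$. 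Then $|\nabla w_n|^2\dd x\weakto\mu-|\nabla u|^2\dd x$, which has the same atoms as $\mu$. The cutoff term vanishes because $w_n\to0$ in $L^2(\Omega)$. The same splitting of $u_n^2$ then turns local bounds for $e^{4\pi pw_n^2}$ into local bounds for $e^{4\pi p'u_n^2}$ with some $p'>1$.
\end{itemize}
With either repair, Step 4 goes through unchanged.
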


The regular part in \eqref{eq:Green-expansion} has the opposite sign
to the function denoted by $H$ in \cite{Flucher1992,CsatoRoy2015}:
$A_{x_0}=-H_{\Omega,x_0}(x_0)$. Thus
\eqref{eq:conformal-radius} agrees with their Green-function
normalization. The function $r_\Omega$ is continuous on
$\overline\Omega$, positive in $\Omega$, and zero on
$\partial\Omega$; see \cite[Proposition~12]{Flucher1992}.

\begin{Lem}[Classical local concentration bound]\label{lem:Flucher}
\textup{\cite[Lemma~4, Theorem~13, and Corollary~15]{Flucher1992}}
If $(u_n)\subset\mathcal S(\Omega)$ concentrates at
$x_0\in\overline\Omega$, then
\begin{equation}\label{eq:local-concentration-bound}
 \limsup_{n\to\infty}
 \int_\Omega(e^{4\pi u_n^2}-1)\dd x
 \le \pi e\,r_\Omega(x_0)^2.
\end{equation}
In particular, the integral on the left tends to zero when
$x_0\in\partial\Omega$.
\end{Lem}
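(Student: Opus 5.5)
This is a classical result, and I would reproduce Flucher's argument in three stages. First, handle the unit disk (the Carleson--Chang estimate). Second, transfer it to a general domain by a capacity comparison in which the Green function at $x_0$ enters through the conformal radius. Third, treat boundary points by the monotonicity of $r_\Omega$.

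\textbf{Step 1 (the ball).} For a radially decreasing sequence in $\mathcal S(B_\rho)$ concentrating at $0$, the Carleson--Chang estimate gives
\[
\limsup_{n\to\infty}\int_{B_\rho}(e^{4\pi u_n^2}-1)\dd x\le \pi e\rho^2 .
\]
The factor $\rho^2$ comes from scaling. In the variables $t=\log(\rho/|x|)$ this is the one-dimensional Carleson--Chang lemma. By Schwarz symmetrization, which preserves the integral and does not increase the Dirichlet energy, the same bound holds for arbitrary concentrating sequences in $\mathcal S(B_\rho)$.

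\textbf{Step 2 (interior point).} Fix $x_0\in\Omega$ and a concentrating sequence $(u_n)$. Replacing $u_n$ by $|u_n|$, we may assume $u_n\ge0$. Concentration forces $u_n\weakto0$, hence $u_n\to0$ in $L^2(\Omega)$. Moreover, by Lemma~\ref{lem:mt} applied to cut-offs, $e^{4\pi u_n^2}-1\to0$ in $L^1$ away from $x_0$.

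I would fix a large level $L$ and split the integral over $\{u_n\le L\}$ and $E_n:=\{u_n>L\}$. The first piece tends to $0$ because $e^{4\pi s^2}-1\le Cs^2$ for $s\le L$. For the second piece, write $a_n:=\int|\nabla\min(u_n,L)|^2$. The Dirichlet capacity of $E_n$ relative to $\Omega$ then satisfies $\operatorname{cap}(E_n)\le a_n/L^2$, and $v_n:=(u_n-L)_+\in H^1_0(E_n)$ has $\int|\nabla v_n|^2\le 1-a_n$.

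The key geometric input is a sharp isocapacitary estimate near $x_0$. For sets $E$ shrinking to $x_0$, I expect
\[
|E|\le \pi r_\Omega(x_0)^2 e^{-4\pi/\operatorname{cap}(E)}(1+o(1)).
\]
I would prove this by comparing with the Green superlevel sets $\{G(\cdot,x_0)>t\}$. By \eqref{eq:Green-expansion} these are asymptotically disks of radius $r_\Omega(x_0)e^{-2\pi t}$ with capacity $1/t$. Green-level symmetrization shows they minimize capacity among sets of given measure, up to $1+o(1)$.

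On $E_n$ I would bound $u_n^2=(L+v_n)^2\le (1+\theta)v_n^2+(1+\theta^{-1})L^2$. Symmetrizing $v_n$ into a disk of area $|E_n|$ and applying Step 1 with the energy rescaled by $1-a_n$ gives an estimate for $\int_{E_n}e^{4\pi u_n^2}$ in terms of $a_n$, $L$ and $|E_n|$. Optimizing over $\theta$ and $L$, the factor $e^{4\pi L^2(\dots)}$ should cancel against $e^{-4\pi/\operatorname{cap}}\le e^{-4\pi L^2/a_n}$, leaving $\pi e\,r_\Omega(x_0)^2$ in the limit. This is the Carleson--Chang/Flucher mechanism in which the profile $\frac{1}{\sqrt{4\pi}}\bigl(\sqrt{\log(1/\epsilon)}\,\cdots\bigr)$ matches the Green function.

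\textbf{Step 3 (boundary point).} For $x_0\in\partial\Omega$, embed a neighbourhood of $x_0$ in $\Omega$ into domains $\Omega'\supset\Omega$ in which $x_0$ becomes interior. Monotonicity of Green functions and Step 2 in $\Omega'$ give the bound $\pi e\,r_{\Omega'}(x_0)^2$. Shrinking the protrusion and using continuity of $r$ with $r_\Omega=0$ on $\partial\Omega$ (cf.\ \cite[Proposition~12]{Flucher1992}) yields the limit $0$.

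\textbf{Main obstacle.} I expect the main difficulty to be the sharp isocapacitary inequality with the exact constant $r_\Omega(x_0)^2$, together with the bookkeeping showing that the errors $o(1)$ in capacity and the choice of $L$ are uniform along the sequence. I would first try to prove the isocapacitary inequality through Flucher's transplantation of $u_n$ onto $B_{r_\Omega(x_0)}$ along Green level sets. This transplantation preserves the distribution of values and asymptotically does not increase the Dirichlet energy for concentrating sequences. It would reduce Step 2 directly to Step 1 on the disk of radius $r_\Omega(x_0)$.
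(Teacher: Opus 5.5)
Your Step~2 cannot produce the sharp constant. The cancellation you expect leaves $\pi(1+e)\,r_\Omega(x_0)^2$, not $\pi e\,r_\Omega(x_0)^2$.

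To see this, test your scheme on a sequence that attains the bound and has $\norm{\nabla u_n}_2=1$. Build it as in Proposition~\ref{prop:test}: the local Liouville bubble sits in the core and is matched to $G(\cdot,x_0)/c_n$ outside, with $c_n\to\infty$. For fixed $L$ you get the following.
\begin{itemize}
\item $E_n=\{G(\cdot,x_0)>c_nL\}$, $a_n=L/c_n$ and $\norm{\nabla v_n}_2^2=1-a_n$.
\item Admissibility of $(1+\theta)^{1/2}v_n$ for Moser--Trudinger therefore forces $1+\theta^{-1}\ge 1/a_n$.
\item Hence your majorant $e^{4\pi[(1+\theta)v_n^2+(1+\theta^{-1})L^2]}$ is at least $e^{4\pi L^2/a_n}$ on all of $E_n$, and also at least $e^{4\pi u_n^2}$.
\item $e^{4\pi L^2/a_n}|E_n|=e^{4\pi c_nL}|\{G>c_nL\}|\to\pi r_\Omega(x_0)^2$.
\item The bubble core has measure $o(|E_n|)$ and carries $\pi e\,r_\Omega(x_0)^2+o(1)$.
\end{itemize}
So for every admissible $\theta$ and every $L$, your intermediate bound is at least $\pi(1+e)\,r_\Omega(x_0)^2-o(1)$, and the later estimates only enlarge it. Yet here $\int_{E_n}e^{4\pi u_n^2}\dd x\to\pi e\,r_\Omega(x_0)^2$.

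The loss comes from the Young step $2Lv_n\le\theta v_n^2+\theta^{-1}L^2$. For $\theta=a_n/(1-a_n)$ it is sharp only where $v_n=c_n-L$, that is, in the core. On the rest of $E_n$, where $v_n$ is small, it inflates $e^{4\pi L^2}$ to $e^{4\pi L^2/a_n}$. The background of the inner Moser--Trudinger problem on $E_n$ is thus counted a second time.

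Separately, quoting the value $1+e$ from Step~1 rather than the Carleson--Chang supremum requires the rescaled symmetrization of $v_n/\sqrt{1-a_n}$ to be weakly null. You do not check this.

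What is needed is a comparison at \emph{every} level at once. Your closing paragraph points there, but the transplantation does not preserve the distribution of $u_n$: it preserves the relative capacity of each level set, and the areas are only dominated. One route is as follows.
\begin{itemize}
\item Let $u_n^\#$ be radial on $B_r$, $r=r_\Omega(x_0)$, with $\operatorname{cap}_{B_r}\{u_n^\#>t\}=\operatorname{cap}_\Omega\{u_n>t\}$ for all $t>0$. The series law for condenser capacities gives $\norm{\nabla u_n^\#}_2\le\norm{\nabla u_n}_2$.
\item Write $G(x,y)=\frac1{2\pi}\log\frac1{|x-y|}+A(x,y)$ with $A$ continuous and $A(x_0,x_0)=A_{x_0}$. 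The Green-energy formula for $1/\operatorname{cap}_\Omega$, combined with the P\'olya--Szeg\H{o} inequality $\operatorname{cap}_{\log}E\ge(|E|/\pi)^{1/2}$, gives $|E|\le e^{o_\delta(1)}\pi r^2e^{-4\pi/\operatorname{cap}_\Omega E}$. This holds for \emph{every} $E\subset B_\delta(x_0)$, not only for shrinking sets.
\item Consequently $|\{u_n>t\}\cap B_\delta(x_0)|\le e^{o_\delta(1)}|\{u_n^\#>t\}|$ for all $t$.
\item For each $t>0$, $\operatorname{cap}_\Omega\{u_n>t\}\to0$, so $u_n^\#\weakto0$, which is all Carleson--Chang needs.
\end{itemize}
The layer-cake formula and your localization away from $x_0$ then reduce the claim to Step~1 on $B_r$.

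In Step~3, the statement $r_{\Omega'}(x_0)\to0$ concerns variation of the domain, not continuity of $r_\Omega$ in the point. It follows from domain monotonicity of the conformal radius together with an exterior-disk comparison.

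The paper itself does not reprove the lemma. It quotes Flucher's theorem and refers to Csat\'o--Roy for the sequence modifications.
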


The coefficient in \eqref{eq:local-concentration-bound} is the
Carleson--Chang concentration value on the disk, transported by
Flucher's concentration formula. We invoke that local result as a
known theorem. For a detailed discussion of the sequence
modifications used in the domain-to-disk argument, see
\cite[Section~6, especially Remarks~28 and~30]{CsatoRoy2015}.
Neither of the two preceding lemmas assumes that $(u_n)$ solves an
Euler--Lagrange equation.

\section{The sharp inequality and nonlocal compactness}
\label{sec:sharp-compactness}

\subsection{The sharp exponential threshold}

By Lemma~\ref{lem:fixed-function} and \eqref{eq:hls},
$\mathcal F_\beta(u)<\infty$ for every individual
$u\in H_0^1(\Omega)$ and every $\beta>0$. The next proposition
concerns uniform boundedness on $\mathcal S(\Omega)$.
\begin{Prop}[Sharp boundedness threshold]\label{prop:sharp}
The boundedness dichotomy in Theorem~\ref{thm:main} holds.
\end{Prop}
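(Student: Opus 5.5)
The proof has two parts: a uniform bound for $\beta\le\beta_*$, and blow-up for $\beta>\beta_*$.

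\emph{Upper bound for $\beta\le\beta_*$.} Since $e^{\beta u^2}\le e^{\beta_* u^2}$, it suffices to treat $\beta=\beta_*$. For $u\in\mathcal S(\Omega)$, apply Lemma~\ref{lem:hls} with $f=g=\mathbf1_\Omega e^{\beta_*u^2}$. This gives
\[
 \mathcal F_{\beta_*}(u)\le C_{2,\alpha}\Bigl(\int_\Omega e^{q_\alpha\beta_* u^2}\dd x\Bigr)^{2/q_\alpha}
 =C_{2,\alpha}\Bigl(\int_\Omega e^{4\pi u^2}\dd x\Bigr)^{(4-\alpha)/2},
\]
using $q_\alpha\beta_*=4\pi$. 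Lemma~\ref{lem:mt} now bounds the right-hand side by $C_{2,\alpha}(C_{\mathrm{MT}}|\Omega|)^{(4-\alpha)/2}$, uniformly in $u$.

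\emph{Blow-up for $\beta>\beta_*$.} Fix $x_0\in\Omega$ and $R>0$ with $B_R(x_0)\subset\Omega$. Take the Moser functions
\[
 m_n(x)=\frac1{\sqrt{2\pi}}
 \begin{cases}
 \sqrt{\log n}, & |x-x_0|\le R/n,\\
 \log(R/|x-x_0|)/\sqrt{\log n}, & R/n<|x-x_0|\le R,\\
 0, & \text{otherwise}.
 \end{cases}
\]
Then $\norm{\nabla m_n}_2=1$ and $m_n^2=\log n/(2\pi)$ on $B_n:=B_{R/n}(x_0)$.

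Restrict both integrations to $B_n\times B_n$:
\[
 \mathcal F_\beta(m_n)\ge n^{\beta/\pi}\int_{B_n}\!\int_{B_n}|x-y|^{-\alpha}\dd x\dd y .
\]
By scaling, the double integral equals $c_\alpha(R/n)^{4-\alpha}$ with $c_\alpha=\int_{B_1}\int_{B_1}|x-y|^{-\alpha}\dd x\dd y>0$, or simply bound it below by $|B_n|^2(2R/n)^{-\alpha}$. Hence
\[
 \mathcal F_\beta(m_n)\ge c\,n^{\beta/\pi-(4-\alpha)},
\]
which tends to infinity exactly when $\beta>(4-\alpha)\pi=\beta_*$.

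Neither step is a real obstacle. The only points to check are the exponent bookkeeping $2/q_\alpha=(4-\alpha)/2$ and the fact that the diagonal restriction captures the whole growth.
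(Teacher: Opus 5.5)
Your proposal is correct and follows essentially the same route as the paper: the sharp HLS inequality applied to $\mathbf1_\Omega e^{\beta u^2}$ combined with Trudinger--Moser via $q_\alpha\beta_*=4\pi$ for $\beta\le\beta_*$, and the Moser sequence restricted to the inner disk times itself, giving growth $\ell^{4-\alpha-\beta/\pi}$, for $\beta>\beta_*$. Your preliminary reduction to $\beta=\beta_*$ by monotonicity is harmless; the paper simply uses $q_\alpha\beta\le4\pi$ directly.
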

\begin{proof}
If $0<\beta\le\beta_*$ and $u\in\mathcal S(\Omega)$, apply
\eqref{eq:hls} to $\mathbf1_\Omega e^{\beta u^2}$ to obtain
\[
 \begin{aligned}
 \mathcal F_\beta(u)
 &\le C_{2,\alpha}
 \left(\int_\Omega e^{q_\alpha\beta u^2}\dd x\right)^{2/q_\alpha}\\
 &\le C_{2,\alpha}
 \bigl(C_{\mathrm{MT}}|\Omega|\bigr)^{(4-\alpha)/2}.
 \end{aligned}
\]
Here $q_\alpha\beta\le4\pi$ by \eqref{eq:critical-parameters},
so \eqref{eq:mt} applies, including at $\beta=\beta_*$.

For $\beta>\beta_*$, translate coordinates so that $0\in\Omega$
and fix $L>0$ with $B_L(0)\Subset\Omega$. For $0<\ell<L$, set
\[
 m_\ell(x)=\frac1{\sqrt{2\pi}}
 \begin{cases}
 \sqrt{\log(L/\ell)},&|x|\le\ell,\\[2pt]
 \displaystyle\frac{\log(L/|x|)}{\sqrt{\log(L/\ell)}},
 &\ell<|x|<L,\\[5pt]
 0,&|x|\ge L.
 \end{cases}
\]
Then $m_\ell\in H_0^1(\Omega)$ and
\[
 \int_\Omega|\nabla m_\ell|^2\dd x
 =\frac1{\log(L/\ell)}\int_\ell^L\frac{\dd r}{r}=1.
\]
Since $e^{\beta m_\ell^2}=(L/\ell)^{\beta/(2\pi)}$ on $B_\ell$,
restriction to $B_\ell\times B_\ell$ and a change of variables give
\[
 \mathcal F_\beta(m_\ell)
 \ge L^{\beta/\pi}\ell^{4-\alpha-\beta/\pi}
 \int_{B_1}\!\int_{B_1}|z-w|^{-\alpha}\dd z\dd w.
\]
The last integral is finite and positive. Because
$4-\alpha-\beta/\pi<0$, the lower bound tends to infinity as
$\ell\downarrow0$.
\end{proof}

Taking any nonzero function in $\mathcal S(\Omega)$ yields
\begin{equation}\label{eq:strict-background}
 I_\alpha(\Omega)<\Lambda_0<\infty,
\end{equation}
since the kernel is positive and the exponential product is strictly
larger than one on a set of positive measure.

\subsection{Continuity and separation of the background}

\begin{Lem}[Transfer of local compactness]\label{lem:transfer}
Suppose $u_n,u\in H_0^1(\Omega)$ and
$e^{4\pi u_n^2}\to e^{4\pi u^2}$ in $L^1(\Omega)$. Then
\begin{equation}\label{eq:nonlocal-density-convergence}
 e^{\beta_*u_n^2}\longrightarrow e^{\beta_*u^2}
 \quad\text{in }L^{q_\alpha}(\Omega),
 \qquad
 \mathcal F_{\beta_*}(u_n)\longrightarrow\mathcal F_{\beta_*}(u).
\end{equation}
\end{Lem}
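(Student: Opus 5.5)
The argument has two steps. First we show that the densities $f_n:=e^{\beta_*u_n^2}$ converge to $f:=e^{\beta_*u^2}$ in $L^{q_\alpha}(\Omega)$. Then we pass to the limit in the bilinear form using the HLS inequality \eqref{eq:hls}. The key identity is $f_n^{q_\alpha}=e^{q_\alpha\beta_*u_n^2}=e^{4\pi u_n^2}$, by \eqref{eq:critical-parameters}. So the hypothesis says exactly that $f_n^{q_\alpha}\to f^{q_\alpha}$ in $L^1(\Omega)$. In particular $\norm{f_n}_{q_\alpha}\to\norm{f}_{q_\alpha}$, and $f\in L^{q_\alpha}(\Omega)$ by Lemma~\ref{lem:fixed-function}.

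For the first step we use a subsequence argument. Given any subsequence, $L^1$ convergence of $e^{4\pi u_n^2}$ lets us extract a further subsequence along which $e^{4\pi u_n^2}\to e^{4\pi u^2}$ almost everywhere. Since $t\mapsto e^{4\pi t}$ is injective on $[0,\infty)$, this gives $u_n^2\to u^2$ almost everywhere, and hence $f_n\to f$ almost everywhere. We then apply the Brezis--Lieb lemma, or equivalently the Radon--Riesz property of $L^{q_\alpha}$ with $q_\alpha>1$. Almost everywhere convergence together with convergence of the $L^{q_\alpha}$ norms yields $f_n\to f$ in $L^{q_\alpha}(\Omega)$. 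A more elementary alternative is the pointwise bound $|f_n-f|^{q_\alpha}\le 2^{q_\alpha-1}(f_n^{q_\alpha}+f^{q_\alpha})$. Its right-hand side converges in $L^1$, so the generalized dominated convergence theorem gives $\int|f_n-f|^{q_\alpha}\to0$. Since the limit does not depend on the chosen subsequence, the whole sequence converges.

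For the second step, bilinearity and symmetry give
\[
 \mathcal F_{\beta_*}(u_n)-\mathcal F_{\beta_*}(u)
 =\mathcal B_\alpha(f_n-f,f_n)+\mathcal B_\alpha(f,f_n-f).
\]
By \eqref{eq:hls}, applied to the zero extensions, this is bounded in absolute value by
\[
 C_{2,\alpha}\norm{f_n-f}_{q_\alpha}\bigl(\norm{f_n}_{q_\alpha}+\norm{f}_{q_\alpha}\bigr).
\]
This tends to zero because the norms stay bounded.

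The only step needing care is the first one, namely upgrading convergence of $f_n^{q_\alpha}$ in $L^1$ to convergence of $f_n$ in $L^{q_\alpha}$. It is handled by the generalized dominated convergence argument after extracting an almost everywhere convergent subsequence. Everything else is a direct application of the HLS inequality.
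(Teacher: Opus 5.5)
Your proof is correct. Your second step, the bilinear splitting followed by \eqref{eq:hls}, is exactly the paper's argument.

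The difference lies in the step you single out as needing care. The paper avoids subsequences and almost-everywhere convergence entirely. It uses the elementary pointwise inequality $|A-B|^q\le|A^q-B^q|$ for $A,B\ge0$ and $q\ge1$, which is the superadditivity of $t\mapsto t^q$ on $[0,\infty)$. With $q=q_\alpha$ and $f_n^{q_\alpha}=e^{4\pi u_n^2}$, this gives in one line
\[
 \norm{f_n-f}_{q_\alpha}^{q_\alpha}\le\norm{e^{4\pi u_n^2}-e^{4\pi u^2}}_1 .
\]
This estimate is quantitative and transfers a rate of convergence. It also makes the first step essentially trivial.

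Your route is qualitative. It passes through almost-everywhere convergent subsequences and generalized dominated convergence, or Brezis--Lieb. In return, it is the general principle "a.e. convergence plus convergence of $L^q$ norms implies $L^q$ convergence". That principle holds for every $0<q<\infty$ and for signed functions. The paper's pointwise inequality needs $q\ge1$, and for $q<1$ it is reversed. Here $q_\alpha\in(1,2)$, so this extra generality is not needed.

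One small point: to pass from a.e. convergence of $e^{4\pi u_n^2}$ to that of $f_n$, you need continuity of $s\mapsto s^{1/q_\alpha}$ (equivalently, of the logarithm), not just injectivity of $t\mapsto e^{4\pi t}$. This does not affect the argument.
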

\begin{proof}
For $A,B\ge0$ and $q\ge1$, one has
$|A-B|^q\le|A^q-B^q|$. Hence
\[
 \norm{e^{\beta_*u_n^2}-e^{\beta_*u^2}}_{q_\alpha}^{q_\alpha}
 \le\norm{e^{4\pi u_n^2}-e^{4\pi u^2}}_1\longrightarrow0.
\]
If $h_n=e^{\beta_*u_n^2}$ and $h=e^{\beta_*u^2}$, bilinearity
and \eqref{eq:hls} imply
\[
 |\mathcal B_{\alpha,\Omega}(h_n,h_n)
   -\mathcal B_{\alpha,\Omega}(h,h)|
 \le C_{2,\alpha}\norm{h_n-h}_{q_\alpha}
       (\norm{h_n}_{q_\alpha}+\norm{h}_{q_\alpha})\longrightarrow0.
\]
\end{proof}

\begin{Lem}[Background splitting]\label{lem:background-splitting}
Let $(u_n)\subset\mathcal S(\Omega)$ satisfy
$u_n\weakto0$ in $H_0^1(\Omega)$, and set
\[
 v_n:=e^{\beta_*u_n^2}-1,\qquad
 J(u_n):=\int_\Omega(e^{4\pi u_n^2}-1)\dd x.
\]
Then
\begin{equation}\label{eq:background-splitting}
 \mathcal F_{\beta_*}(u_n)
 =I_\alpha(\Omega)+\mathcal B_{\alpha,\Omega}(v_n,v_n)+o_n(1),
\end{equation}
and
\begin{equation}\label{eq:local-to-nonlocal-bound}
 \mathcal F_{\beta_*}(u_n)
 \le I_\alpha(\Omega)+C_{2,\alpha}J(u_n)^{(4-\alpha)/2}+o_n(1).
\end{equation}
\end{Lem}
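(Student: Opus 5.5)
Write $e^{\beta_*u_n^2}=1+v_n$ and expand the product in $\mathcal F_{\beta_*}$ by bilinearity:
\[
 \mathcal F_{\beta_*}(u_n)=\mathcal B_{\alpha,\Omega}(1,1)+2\mathcal B_{\alpha,\Omega}(1,v_n)+\mathcal B_{\alpha,\Omega}(v_n,v_n),
\]
where $\mathcal B_{\alpha,\Omega}(1,1)=I_\alpha(\Omega)$ and we used the symmetry of the kernel. Then \eqref{eq:background-splitting} amounts to showing $\mathcal B_{\alpha,\Omega}(1,v_n)\to0$. By Fubini and \eqref{eq:background-potential},
\[
 0\le\mathcal B_{\alpha,\Omega}(1,v_n)=\int_\Omega \mathcal I_{\alpha,\Omega}[1](y)v_n(y)\dd y\le M_{\alpha,\Omega}\norm{v_n}_1 .
\]
Here $v_n\ge0$. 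It therefore suffices to prove $v_n\to0$ in $L^1(\Omega)$.

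This $L^1$ convergence is the main step, and I would argue it with Vitali's theorem. Since $u_n\weakto0$ in $H_0^1(\Omega)$, Rellich's theorem gives, along subsequences, $u_n\to0$ a.e. Hence $v_n\to0$ a.e. For uniform integrability, note that $q_\alpha>1$ and $q_\alpha\beta_*=4\pi$. By \eqref{eq:mt},
\[
 \int_\Omega(1+v_n)^{q_\alpha}\dd x=\int_\Omega e^{4\pi u_n^2}\dd x\le C_{\mathrm{MT}}|\Omega| .
\]
So $(v_n)$ is bounded in $L^{q_\alpha}$ with $q_\alpha>1$, which makes it uniformly integrable on the bounded set $\Omega$. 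Vitali then gives $\norm{v_n}_1\to0$. A subsequence argument (every subsequence has a further subsequence along which the limit is $0$) upgrades this to the full sequence.

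For \eqref{eq:local-to-nonlocal-bound}, apply \eqref{eq:hls} to the zero extension of $v_n$:
\[
 \mathcal B_{\alpha,\Omega}(v_n,v_n)\le C_{2,\alpha}\norm{v_n}_{q_\alpha}^2 .
\]
The elementary inequality $|A-B|^q\le|A^q-B^q|$ (for $A,B\ge0$, $q\ge1$) from the proof of Lemma~\ref{lem:transfer}, with $A=e^{\beta_*u_n^2}$ and $B=1$, gives
\[
 \norm{v_n}_{q_\alpha}^{q_\alpha}\le\int_\Omega(e^{4\pi u_n^2}-1)\dd x=J(u_n).
\]
Since $2/q_\alpha=(4-\alpha)/2$, we get $\mathcal B_{\alpha,\Omega}(v_n,v_n)\le C_{2,\alpha}J(u_n)^{(4-\alpha)/2}$. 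Inserting this into \eqref{eq:background-splitting} yields \eqref{eq:local-to-nonlocal-bound}.

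The only delicate point is the $L^1$ vanishing of $v_n$. It relies on the Trudinger--Moser bound at the exponent $4\pi$, which exceeds $\beta_*$ by the factor $q_\alpha>1$, and this gap is what provides the uniform integrability. Everything else is bilinear bookkeeping.
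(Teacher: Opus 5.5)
Your proof is correct and follows the paper's argument essentially step for step: bilinear expansion, $L^1$ vanishing of $v_n$ via Vitali's theorem using a Trudinger--Moser bound on $e^{\beta_*u_n^2}$ in some $L^s$ with $s>1$, and then HLS combined with $(t-1)^q\le t^q-1$. The differences are cosmetic. You take $s=q_\alpha$, using the Trudinger--Moser inequality exactly at $4\pi$, and pass through a.e.\ convergence along subsequences. The paper instead picks $s$ with $s\beta_*<4\pi$ and uses convergence in measure of the full sequence. Both routes are valid.
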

\begin{proof}
Because $\alpha>0$, we have $\beta_*<4\pi$. Choose $s>1$ such
that $s\beta_*<4\pi$. The Trudinger--Moser inequality gives
$\sup_n\norm{e^{\beta_*u_n^2}}_s<\infty$.
Compact Sobolev embedding implies $u_n\to0$ in measure.
Consequently, Vitali's theorem yields $v_n\to0$ in $L^1(\Omega)$.
In particular, by \eqref{eq:background-potential},
\[
 0\le\mathcal B_{\alpha,\Omega}(1,v_n)
 \le M_{\alpha,\Omega}\norm{v_n}_1\longrightarrow0.
\]
Expanding $\mathcal B_{\alpha,\Omega}(1+v_n,1+v_n)$ proves
\eqref{eq:background-splitting}.

The inequality $(t-1)^q\le t^q-1$ for $t\ge1$, $q\ge1$,
together with \eqref{eq:critical-parameters}, gives
\[
 \norm{v_n}_{q_\alpha}^{q_\alpha}
 \le\int_\Omega(e^{q_\alpha\beta_*u_n^2}-1)\dd x=J(u_n).
\]
Applying \eqref{eq:hls} to the quadratic term in
\eqref{eq:background-splitting} proves
\eqref{eq:local-to-nonlocal-bound}.
\end{proof}

\begin{Cor}[Nonlocal concentration--compactness alternative]
\label{cor:nonlocal-alternative}
Every sequence $(u_n)\subset\mathcal S(\Omega)$ has a weakly
convergent subsequence $u_n\weakto u$ for which at least one of the
following holds: either $u_n$ concentrates at a point of
$\overline\Omega$, or \eqref{eq:nonlocal-density-convergence} holds.
\end{Cor}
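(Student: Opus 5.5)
The corollary follows by combining Lemma~\ref{lem:local-alternative} with Lemma~\ref{lem:transfer}. Let $(u_n)\subset\mathcal S(\Omega)$ be arbitrary. By Lemma~\ref{lem:local-alternative}, we pass to a subsequence that converges weakly in $H_0^1(\Omega)$ and almost everywhere to some $u\in\mathcal S(\Omega)$. Along this subsequence, at least one of the two local conclusions holds.

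If conclusion~(i) holds, the subsequence concentrates at some $x_0\in\overline\Omega$, in the sense of \eqref{eq:concentration-definition}. This is the first alternative of the corollary, and nothing further needs to be shown.

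If conclusion~(ii) holds, then $e^{4\pi u_n^2}\to e^{4\pi u^2}$ strongly in $L^1(\Omega)$. Since $u_n,u\in H_0^1(\Omega)$, Lemma~\ref{lem:transfer} applies directly. It gives $e^{\beta_*u_n^2}\to e^{\beta_*u^2}$ in $L^{q_\alpha}(\Omega)$ and $\mathcal F_{\beta_*}(u_n)\to\mathcal F_{\beta_*}(u)$, which is exactly \eqref{eq:nonlocal-density-convergence}.

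One point needs care: the subsequence must be chosen once, and the weak limit $u$ given by Lemma~\ref{lem:local-alternative} must be the same function that appears in the second alternative. Both conclusions of that lemma refer to the same subsequence and the same limit, so the two alternatives are stated for a common subsequence and no further extraction is needed. The step doing the real work is the transfer through the sharp HLS inequality, which is already contained in Lemma~\ref{lem:transfer}. The corollary is therefore a direct combination with no significant obstacle.
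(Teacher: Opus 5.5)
Your proposal is correct and is the same as the paper's proof. You extract the subsequence and weak limit via Lemma~\ref{lem:local-alternative}, keep case (i) as the concentration alternative, and pass case (ii) through Lemma~\ref{lem:transfer} to obtain \eqref{eq:nonlocal-density-convergence}.
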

\begin{proof}
Apply Lemma~\ref{lem:local-alternative} to the local density and
then Lemma~\ref{lem:transfer} to its compactness conclusion.
\end{proof}

\section{The concentration upper bound}\label{sec:concentration-upper}

\begin{Prop}[Upper bound for arbitrary concentrating sequences]
\label{prop:concentration-upper}
If $(u_n)\subset\mathcal S(\Omega)$ concentrates at
$x_0\in\overline\Omega$, then
\begin{equation}\label{eq:concentration-upper}
 \limsup_{n\to\infty}\mathcal F_{\beta_*}(u_n)
 \le I_\alpha(\Omega)+K_\alpha(x_0).
\end{equation}
If $x_0\in\partial\Omega$, then
$\mathcal F_{\beta_*}(u_n)\to I_\alpha(\Omega)$.
\end{Prop}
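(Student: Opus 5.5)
The plan is to combine the background splitting of Lemma~\ref{lem:background-splitting} with Flucher's local bound in Lemma~\ref{lem:Flucher}; the only preliminary task is to verify the hypothesis $u_n\weakto0$.

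\textbf{Step 1: weak limit zero.} Suppose $(u_n)$ concentrates at $x_0$. By weak compactness of $\mathcal S(\Omega)$, any subsequence has a further subsequence with $u_n\weakto u$. Lower semicontinuity, tested against a cutoff $\varphi\in C(\overline\Omega)$ vanishing near $x_0$, gives
\[
 \int_\Omega\varphi|\nabla u|^2\dd x\le\liminf_{n}\int_\Omega\varphi|\nabla u_n|^2\dd x=\varphi(x_0)=0 .
\]
Hence $\nabla u=0$ a.e. away from $x_0$, so $\nabla u=0$ a.e. on $\Omega$. Since $u\in H_0^1(\Omega)$, this forces $u=0$. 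Alternatively, this is the conclusion recorded in alternative (i) of Lemma~\ref{lem:local-alternative}. Because the limit is always $0$, the whole sequence satisfies $u_n\weakto0$.

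\textbf{Step 2: apply the splitting bound.} Fix a subsequence realizing the $\limsup$ of $\mathcal F_{\beta_*}(u_n)$. Inequality \eqref{eq:local-to-nonlocal-bound} applies to it and gives
\[
 \limsup_{n}\mathcal F_{\beta_*}(u_n)\le I_\alpha(\Omega)+C_{2,\alpha}\Bigl(\limsup_{n}J(u_n)\Bigr)^{(4-\alpha)/2}.
\]
Here we use that $t\mapsto t^{(4-\alpha)/2}$ is increasing and continuous on $[0,\infty)$, and that $J(u_n)\ge0$.

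\textbf{Step 3: insert Flucher's bound and check constants.} Lemma~\ref{lem:Flucher} gives $\limsup_n J(u_n)\le\pi e\,r_\Omega(x_0)^2$. Substituting, the excess term is bounded by
\[
 C_{2,\alpha}(\pi e)^{(4-\alpha)/2}r_\Omega(x_0)^{4-\alpha}
 =\frac{2\pi^{\alpha/2}}{2-\alpha}\,\pi^{(4-\alpha)/2}e^{(4-\alpha)/2}r_\Omega(x_0)^{4-\alpha}.
\]
The powers of $\pi$ combine as $\pi^{\alpha/2+(4-\alpha)/2}=\pi^2$, so this equals $\frac{2\pi^2}{2-\alpha}e^{(4-\alpha)/2}r_\Omega(x_0)^{4-\alpha}=K_\alpha(x_0)$ by \eqref{eq:K-alpha}. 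This proves \eqref{eq:concentration-upper}.

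\textbf{Step 4: boundary points.} If $x_0\in\partial\Omega$, then $r_\Omega(x_0)=0$, so $K_\alpha(x_0)=0$ and the upper bound reduces to $I_\alpha(\Omega)$. For the matching lower bound, \eqref{eq:background-splitting} gives $\mathcal F_{\beta_*}(u_n)=I_\alpha(\Omega)+\mathcal B_{\alpha,\Omega}(v_n,v_n)+o_n(1)$ with $\mathcal B_{\alpha,\Omega}(v_n,v_n)\ge0$, since $v_n\ge0$ and the kernel is positive. Together these give $\liminf_n\mathcal F_{\beta_*}(u_n)\ge I_\alpha(\Omega)$, hence $\mathcal F_{\beta_*}(u_n)\to I_\alpha(\Omega)$.

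The only delicate point is Step 1, that is, making sure the concentration definition \eqref{eq:concentration-definition} forces a zero weak limit for the full sequence. The rest is a direct chaining of earlier lemmas and bookkeeping of the constant.
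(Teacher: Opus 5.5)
Your proof is correct and follows the paper's route: you show the weak limit is zero, combine \eqref{eq:local-to-nonlocal-bound} with Flucher's bound and check the constants, and handle the boundary case via a lower bound by $I_\alpha(\Omega)$. The differences are cosmetic. You identify the weak limit through weighted lower semicontinuity rather than the paper's splitting of $\int_\Omega\nabla u_n\cdot\nabla\zeta$, and you get the boundary lower bound from the splitting instead of the trivial inequality $\mathcal F_{\beta_*}\ge I_\alpha(\Omega)$. Your side appeal to Lemma~\ref{lem:local-alternative}(i) is not a valid deduction as that lemma is stated, since it does not say that a concentrating sequence must fall under alternative (i). Your primary argument makes that appeal unnecessary.
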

\begin{proof}
A concentrating sequence converges weakly to zero in
$H_0^1(\Omega)$. For completeness, for every smooth test function
$\zeta$ and every fixed $r>0$, split
$\int_\Omega\nabla u_n\cdot\nabla\zeta$ into $B_r(x_0)$ and
its complement. The first part is bounded by
$\norm{\nabla\zeta}_{L^2(\Omega\cap B_r(x_0))}$, while the second
tends to zero by concentration. Sending $n\to\infty$ and then
$r\downarrow0$ identifies every weak limit as zero.

Now combine \eqref{eq:local-to-nonlocal-bound} with
\eqref{eq:local-concentration-bound} to obtain
\[
 \limsup_{n\to\infty}\mathcal F_{\beta_*}(u_n)
 \le I_\alpha(\Omega)+
 C_{2,\alpha}\bigl(\pi e\,r_\Omega(x_0)^2\bigr)^{(4-\alpha)/2}.
\]
The constants match exactly:
\begin{equation}\label{eq:constant-matching}
 \begin{aligned}
 C_{2,\alpha}\bigl(\pi e\,r_\Omega(x_0)^2\bigr)^{(4-\alpha)/2}
 &=\frac{2\pi^2}{2-\alpha}
   e^{(4-\alpha)/2}r_\Omega(x_0)^{4-\alpha}\\
 &=K_\alpha(x_0).
 \end{aligned}
\end{equation}
This proves \eqref{eq:concentration-upper}. At a boundary point,
$r_\Omega(x_0)=0$ and the lower bound
$\mathcal F_{\beta_*}(u_n)\ge I_\alpha(\Omega)$ gives convergence
to the background value.
\end{proof}

\begin{Rem}\label{rem:upper-scope}
The estimate \eqref{eq:concentration-upper} concerns arbitrary
concentrating sequences, not merely subcritical extremals. Its proof
uses the local concentration theorem only after subtracting the
background. Applying HLS directly to $e^{\beta_*u_n^2}$ would
retain the background inside the power and would not give
\eqref{eq:constant-matching}. Sharpness of the latter bound is a
separate issue, proved in the next section.
\end{Rem}

\section{Test functions and the exact concentration level}\label{sec:test}

\subsection{A normalized HLS profile}

The following explicit function is used as a trial profile. Its
normalization is obtained from the HLS equality case, independently
of any classification theorem for nonlocal differential equations.
\begin{Lem}[Profile identities]\label{lem:profile}
Set
\begin{equation}\label{eq:trial-profile}
 \kappa_\alpha:=\left(\frac{2\pi^2}{2-\alpha}\right)^{2/(4-\alpha)},
 \qquad
 \varphi(z):=-\frac1{4\pi}\log(1+\kappa_\alpha|z|^2),
 \qquad f(z):=e^{2\beta_*\varphi(z)}.
\end{equation}
Then
\begin{equation}\label{eq:profile-normalization}
 f(z)=(1+\kappa_\alpha|z|^2)^{-(4-\alpha)/2},\qquad
 \int_{\R^2}f^{q_\alpha}\dd z=\frac\pi{\kappa_\alpha},\qquad
 \mathcal B_{\alpha,\R^2}(f,f)=1.
\end{equation}
Moreover, for $R>0$,
\begin{equation}\label{eq:profile-energy}
 E_\varphi(R):=\int_{B_R}|\nabla\varphi|^2\dd z
 =\frac1{4\pi}\left[
 \log(1+\kappa_\alpha R^2)
 +\frac1{1+\kappa_\alpha R^2}-1\right].
\end{equation}
If
\[
 \theta_R:=\frac{\kappa_\alpha R^2}{4\pi(1+\kappa_\alpha R^2)},
\]
then
\begin{equation}\label{eq:profile-energy-defect}
 E_\varphi(R)+\varphi(R)=-\theta_R,
 \qquad \theta_R=\frac1{4\pi}+O(R^{-2})\quad(R\to\infty).
\end{equation}
Finally, for $R\ge1$,
\begin{equation}\label{eq:profile-tail}
 0\le1-\mathcal B_{\alpha,\R^2}
 (f\mathbf1_{B_R},f\mathbf1_{B_R})
 \le C R^{-(4-\alpha)/2}.
\end{equation}
\end{Lem}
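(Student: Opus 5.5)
The identities are elementary computations in polar coordinates. The only nontrivial inputs are the HLS equality case and its constant from Lemma~\ref{lem:hls}, together with the critical relation $q_\alpha\beta_*=4\pi$ from \eqref{eq:critical-parameters}. I would prove the five assertions in the order stated.

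\emph{Formula for $f$ and the $L^{q_\alpha}$ integral.} Since $2\beta_*/(4\pi)=(4-\alpha)/2$, we get $f=(1+\kappa_\alpha|z|^2)^{-(4-\alpha)/2}$ directly. Because $q_\alpha(4-\alpha)/2=2$, it follows that $f^{q_\alpha}=(1+\kappa_\alpha|z|^2)^{-2}$. Polar integration with $s=\kappa_\alpha r^2$ then gives $\int_{\R^2}f^{q_\alpha}=\pi/\kappa_\alpha$.

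\emph{The HLS value.} Here $f$ is an HLS extremal with $\gamma^2=1/\kappa_\alpha$ and $\xi=0$, so equality holds in \eqref{eq:hls}:
\[
\mathcal B_{\alpha,\R^2}(f,f)=C_{2,\alpha}\,(\pi/\kappa_\alpha)^{2/q_\alpha}
=\frac{2\pi^{\alpha/2}}{2-\alpha}\,\pi^{(4-\alpha)/2}\kappa_\alpha^{-(4-\alpha)/2}
=\frac{2\pi^2}{2-\alpha}\,\kappa_\alpha^{-(4-\alpha)/2}=1 .
\]
The last equality is exactly the definition of $\kappa_\alpha$. This is why $\kappa_\alpha$ was chosen this way.

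\emph{Energy and defect.} We have $\varphi'(r)=-\frac{1}{2\pi}\,\kappa_\alpha r/(1+\kappa_\alpha r^2)$, so
\[
E_\varphi(R)=\frac{1}{2\pi}\int_0^R\frac{\kappa_\alpha^2r^3}{(1+\kappa_\alpha r^2)^2}\dd r .
\]
With $s=\kappa_\alpha r^2$ this equals $\frac{1}{4\pi}\int_0^{\kappa_\alpha R^2}\frac{s}{(1+s)^2}\dd s$. Integrating $s(1+s)^{-2}=(1+s)^{-1}-(1+s)^{-2}$ gives \eqref{eq:profile-energy}. Adding $\varphi(R)=-\frac{1}{4\pi}\log(1+\kappa_\alpha R^2)$ cancels the logarithm and leaves
\[
\frac{1}{4\pi}\Bigl[(1+\kappa_\alpha R^2)^{-1}-1\Bigr]=-\theta_R .
\]
Finally, $\frac{1}{4\pi}-\theta_R=\frac{1}{4\pi(1+\kappa_\alpha R^2)}=O(R^{-2})$.

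\emph{Tail bound.} This is the only step with any real estimate. Set $g=f\mathbf1_{\R^2\setminus B_R}$. Positivity of the kernel and bilinearity give
\[
0\le 1-\mathcal B(f\mathbf1_{B_R},f\mathbf1_{B_R})=2\mathcal B(f\mathbf1_{B_R},g)+\mathcal B(g,g)\le 2\mathcal B(f,g).
\]
The lower bound $0\le$ holds because $\mathcal B(f,f)=1$ and the two cross terms are nonnegative. Applying \eqref{eq:hls} gives $\mathcal B(f,g)\le C_{2,\alpha}\norm f_{q_\alpha}\norm g_{q_\alpha}$. The tail norm is
\[
\norm g_{q_\alpha}^{q_\alpha}=\frac{\pi}{\kappa_\alpha}\,(1+\kappa_\alpha R^2)^{-1}\le CR^{-2},
\]
so $\norm g_{q_\alpha}\le CR^{-2/q_\alpha}=CR^{-(4-\alpha)/2}$. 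This is exactly the claimed rate, uniformly for $R\ge1$.

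I expect no genuine obstacle. The only points needing care are bookkeeping of exponents (checking $2/q_\alpha=(4-\alpha)/2$ and $q_\alpha(4-\alpha)/2=2$) and recording that the HLS equality case applies to $f$ with the stated parameters.
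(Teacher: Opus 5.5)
Your proposal is correct and matches the paper's proof step for step: the same exponent bookkeeping, the HLS equality case applied with $\gamma^2=1/\kappa_\alpha$, the substitution $s=\kappa_\alpha r^2$ for the energy, and the inner/outer splitting of $f$ combined with \eqref{eq:hls} for the tail. Your explicit reduction $1-\mathcal B_{\alpha,\R^2}(f\mathbf1_{B_R},f\mathbf1_{B_R})\le 2\mathcal B_{\alpha,\R^2}(f,g)$ simply writes out a step the paper leaves implicit.
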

\begin{proof}
The expression for $f$ follows from
\eqref{eq:critical-parameters}. Since
$f^{q_\alpha}(z)=(1+\kappa_\alpha|z|^2)^{-2}$, polar integration
gives $\int f^{q_\alpha}=\pi/\kappa_\alpha$.
The function $f$ is an HLS equality function. Consequently,
\[
 \mathcal B_{\alpha,\R^2}(f,f)
 =C_{2,\alpha}\left(\frac\pi{\kappa_\alpha}\right)^{2/q_\alpha}
 =\frac{2\pi^2}{2-\alpha}\kappa_\alpha^{-(4-\alpha)/2}=1.
\]
Next,
\[
 \varphi'(r)=-\frac{\kappa_\alpha r}
 {2\pi(1+\kappa_\alpha r^2)}.
\]
Integrating $2\pi\int_0^R|\varphi'(r)|^2r\dd r$ proves
\eqref{eq:profile-energy}; adding $\varphi(R)$ yields
\eqref{eq:profile-energy-defect}.
Finally,
\[
 \int_{\R^2\setminus B_R}f^{q_\alpha}\dd z
 =\frac{\pi}{\kappa_\alpha(1+\kappa_\alpha R^2)}.
\]
Thus $\norm{f\mathbf1_{\R^2\setminus B_R}}_{q_\alpha}
\le CR^{-2/q_\alpha}$. Split $f$ into its inner and outer parts
in \eqref{eq:profile-normalization} and apply \eqref{eq:hls}
to get \eqref{eq:profile-tail}.
\end{proof}

\subsection{Green-function matching and a positive correction}

\begin{Prop}[A concentrating test sequence above the threshold]
\label{prop:test}
For every $x_0\in\Omega$, there are functions
$W_\tau\in H_0^1(\Omega)$, defined for sufficiently small $\tau>0$,
with $\norm{\nabla W_\tau}_2=1$, concentrating at $x_0$ as
$\tau\downarrow0$. They satisfy
\begin{equation}\label{eq:test-limit}
 \mathcal F_{\beta_*}(W_\tau)\longrightarrow
 I_\alpha(\Omega)+K_\alpha(x_0),
\end{equation}
and, for some $c_{x_0}>0$,
\begin{equation}\label{eq:test-strict}
 \mathcal F_{\beta_*}(W_\tau)
 \ge I_\alpha(\Omega)+K_\alpha(x_0)
       +\frac{c_{x_0}}{\log(1/\tau)}
\end{equation}
for all sufficiently small $\tau$.
\end{Prop}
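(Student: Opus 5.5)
We construct $W_\tau$ by the Carleson--Chang/Flucher matching adapted to the HLS profile of Lemma~\ref{lem:profile}. Fix $x_0\in\Omega$, set $\epsilon=\epsilon(\tau)\downarrow0$ (the concentration scale) and $R=R(\tau)\to\infty$ with $R\epsilon\to0$, chosen as powers of $\log(1/\tau)$ relative to $\tau$ (for instance $\epsilon=\tau$, $R=\log^2(1/\tau)$). Write $\rho=R\epsilon$.

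The ansatz is
\[
 W_\tau(x)=\frac1{\sqrt{c}}\begin{cases}
 c+\varphi\bigl((x-x_0)/\epsilon\bigr)+b,&|x-x_0|\le\rho,\\
 2\pi\,G(x,x_0)\cdot\frac{1}{?}\ \text{(suitably scaled)},&|x-x_0|>\rho,
 \end{cases}
\]
that is, inside we use $\sqrt{c}\,W_\tau=c+\varphi(\cdot/\epsilon)+b$, and outside $\sqrt{c}\,W_\tau=G(\cdot,x_0)$. The constants $c$ and $b$ are fixed by continuity on $\partial B_\rho$ and by the Dirichlet normalization. For continuity we use \eqref{eq:Green-expansion}, replacing $\eta_{x_0}$ by a cutoff or absorbing it as an $O(\rho)$ error. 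This gives $c+\varphi(R)+b=-\frac1{2\pi}\log\rho+A_{x_0}$.

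For the energy, Green's identity gives $\int_{\Omega\setminus B_\rho}|\nabla G|^2=-\frac1{2\pi}\log\rho+A_{x_0}+O(\rho\log\rho)$, and the inner energy is $E_\varphi(R)$. Normalization $\int|\nabla W_\tau|^2=1$ therefore reads
\[
 c=E_\varphi(R)-\tfrac1{2\pi}\log\rho+A_{x_0}+O(\rho\log\rho).
\]
Combining this with continuity and \eqref{eq:profile-energy-defect} determines $b=\theta_R+O(\rho|\log\rho|)$. Since $\theta_R\to\frac1{4\pi}>0$, this $b$ is the positive lower-order correction.

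Concentration at $x_0$ follows because the outer energy is $O(1/c)$ with $c\sim\frac1{2\pi}\log(1/\epsilon)$. Weak convergence to $0$ follows in the same way.

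Next we evaluate $\mathcal F_{\beta_*}$. Outside $B_\rho$ we have $W_\tau=O(c^{-1/2}G)$, so $W_\tau\to0$ there and the background argument of Lemma~\ref{lem:background-splitting} gives $\mathcal F=I_\alpha+\mathcal B(v,v)+o(1)$. For lower bounds we simply use $\mathcal F\ge I_\alpha+\mathcal B(v\mathbf1_{B_\rho},v\mathbf1_{B_\rho})$, since all terms are nonnegative. On $B_\rho$,
\[
 \beta_*W_\tau^2=\beta_*c+2\beta_*(\varphi+b)+\beta_*(\varphi+b)^2/c\ \ge\ \beta_*c+2\beta_*\varphi+2\beta_*b,
\]
so $e^{\beta_*W^2}\ge e^{\beta_*c}e^{2\beta_*b}f((x-x_0)/\epsilon)$. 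Changing variables $x=x_0+\epsilon z$ gives
\[
 \mathcal B(e^{\beta_*W^2}\mathbf1_{B_\rho},\cdot)\ge e^{2\beta_*c}e^{4\beta_*b}\epsilon^{4-\alpha}\,\mathcal B_{\alpha,\R^2}(f\mathbf1_{B_R},f\mathbf1_{B_R}).
\]
Inserting the value of $c$, we get $e^{2\beta_*c}\epsilon^{4-\alpha}=e^{2\beta_*(E_\varphi(R)+A_{x_0})}\rho^{-(4-\alpha)}\epsilon^{4-\alpha}(1+o(1))$. Together with $E_\varphi(R)=\frac1{4\pi}\log(\kappa_\alpha R^2)-\frac1{4\pi}+O(R^{-2})$ and $R=\rho/\epsilon$, this collapses to $\kappa_\alpha^{(4-\alpha)/2}e^{-(4-\alpha)/2}r_\Omega(x_0)^{4-\alpha}$. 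Then $e^{4\beta_*b}\to e^{(4-\alpha)}$ converts $e^{-(4-\alpha)/2}$ into $e^{(4-\alpha)/2}$, and by \eqref{eq:profile-normalization} $\kappa_\alpha^{(4-\alpha)/2}=\frac{2\pi^2}{2-\alpha}$. So the leading term is exactly $K_\alpha(x_0)$, while \eqref{eq:profile-tail} costs only $O(R^{-(4-\alpha)/2})$. The subtraction of $1$ in $v$ costs $O(\rho^{2-\alpha}\cdot e^{\beta_*c}\epsilon^{\dots})$, which is negligible.

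For the matching upper bound giving \eqref{eq:test-limit}, we would simply invoke Proposition~\ref{prop:concentration-upper}, since $W_\tau$ concentrates at $x_0$. For the strict inequality \eqref{eq:test-strict}, we retain the neglected nonnegative term $\beta_*(\varphi+b)^2/c$. On a fixed ball $|z|\le1$ this term is $\ge c_0/c\sim c_0'/\log(1/\tau)$, so it produces a multiplicative gain $1+c_1/\log(1/\tau)$ on a fixed positive portion of the interaction. The main obstacle is bookkeeping: we must ensure that every other error (the $O(\rho\log\rho)$ Green-matching error, the $\eta_{x_0}$ cutoff, the $O(R^{-2})$ in $\theta_R$ and $E_\varphi$, and the HLS tail $R^{-(4-\alpha)/2}$) is $o(1/\log(1/\tau))$, and that the lower-order error terms in the lower bound enter with nonnegative sign or are controlled. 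Choosing $\epsilon=\tau$ and $R=\log^2(1/\tau)$ makes $\rho$ polynomially small and $R^{-(4-\alpha)/2}\le(\log1/\tau)^{-2}$, which is why we expect this choice of scales to close the argument.
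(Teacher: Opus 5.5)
Your proposal is correct and is essentially the paper's proof, down to the scales $\epsilon=\tau$ and $R=\log^2(1/\tau)$. Your $c$ and $b$ are the paper's $Q_\tau-\theta_R$ and $\theta_R$, and the only structural difference is that the paper puts $\eta_{x_0}$ into the inner piece, so the traces match exactly and $\norm{\nabla v_\tau}_2^2=1-\theta_R/Q_\tau$ holds with no error term; your cutoff variant also works, but merely ``absorbing'' $\eta_{x_0}$ as an $O(\rho)$ error would leave a jump across $\partial B_\rho(x_0)$, so that alternative should be dropped. One slip to fix: $(\varphi+b)^2$ is not bounded below on $|z|\le1$, because $\varphi+b$ vanishes on the circle $\kappa_\alpha|z|^2=e^{4\pi b}-1\approx e-1$, which lies inside the unit disk since $\kappa_\alpha>\pi$; as in the paper, take a smaller ball $|z|\le s_0$ on which $\varphi+b\ge m_0>0$.
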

\begin{proof}
Fix $x_0\in\Omega$ and choose $\delta_0>0$ such that
$B_{\delta_0}(x_0)\Subset\Omega$ and the decomposition
\eqref{eq:Green-expansion} holds there. In this proof abbreviate
$A=A_{x_0}$, $\eta=\eta_{x_0}$, and $G=G(\cdot,x_0)$.

\smallskip\noindent
\emph{Step 1: matching the two pieces.}
For $\tau>0$ small, set
\begin{equation}\label{eq:test-scales}
 T:=\log(1/\tau),\qquad R:=T^2,\qquad \rho:=\tau R,
 \qquad
 Q_\tau:=\frac{T}{2\pi}+A+
 \frac1{4\pi}\log\frac{1+\kappa_\alpha R^2}{R^2}.
\end{equation}
Then $Q_\tau\sim T/(2\pi)$, $\rho\downarrow0$, and
$B_\rho(x_0)\Subset\Omega$ for $\tau$ small. Put
$c_\tau=\sqrt{Q_\tau}$ and define
\begin{equation}\label{eq:test-v}
 v_\tau(x)=
 \begin{cases}
 \displaystyle c_\tau+
 \frac{\varphi((x-x_0)/\tau)+\eta(x)}{c_\tau},
 &|x-x_0|<\rho,\\[7pt]
 \displaystyle \frac{G(x)}{c_\tau},&|x-x_0|\ge\rho.
 \end{cases}
\end{equation}
By \eqref{eq:test-scales},
\begin{equation}\label{eq:test-matching}
 Q_\tau+\varphi(R)=-\frac1{2\pi}\log\rho+A.
\end{equation}
The two traces in \eqref{eq:test-v} therefore agree exactly on
$\partial B_\rho(x_0)$. Since the outer piece has zero trace on
$\partial\Omega$, it follows that $v_\tau\in H_0^1(\Omega)$.

\smallskip\noindent
\emph{Step 2: exact Dirichlet normalization.}
Write $\varphi_\tau(x):=\varphi((x-x_0)/\tau)$ in the inner ball.
The gradient in the following calculation is with respect to $x$.
Since $\Delta\eta=0$ and $\varphi_\tau=\varphi(R)$ on the sphere,
\[
 \int_{B_\rho(x_0)}\nabla\varphi_\tau\cdot\nabla\eta\dd x
 =\varphi(R)\int_{\partial B_\rho(x_0)}\partial_r\eta\dd S=0.
\]
For the exterior energy, the outward normal of
$\Omega\setminus\overline{B_\rho(x_0)}$ on the inner sphere is
$-\partial_r$. Therefore Green's identity gives
\[
 \int_{\Omega\setminus B_\rho(x_0)}|\nabla G|^2\dd x
 =-\int_{\partial B_\rho(x_0)}G\,\partial_rG\dd S.
\]
Set $s_\rho=-(2\pi)^{-1}\log\rho+A$.
On that sphere, $G=s_\rho+\eta$ and
$\partial_rG=-(2\pi\rho)^{-1}+\partial_r\eta$.
The mean-value property and $\eta(x_0)=0$ imply
\[
 \int_{\partial B_\rho(x_0)}\eta\dd S=0,
 \qquad
 \int_{\partial B_\rho(x_0)}\partial_r\eta\dd S=0,
 \qquad
 \int_{\partial B_\rho(x_0)}\eta\partial_r\eta\dd S
 =\int_{B_\rho(x_0)}|\nabla\eta|^2\dd x.
\]
Consequently,
\begin{equation}\label{eq:exact-exterior-energy}
 \int_{\Omega\setminus B_\rho(x_0)}|\nabla G|^2\dd x
 =s_\rho-\int_{B_\rho(x_0)}|\nabla\eta|^2\dd x.
\end{equation}
The inner and outer $\eta$-energies cancel. Using
\eqref{eq:test-matching}, \eqref{eq:exact-exterior-energy}, and
\eqref{eq:profile-energy-defect}, we obtain the exact identity
\begin{equation}\label{eq:exact-normalization}
 \norm{\nabla v_\tau}_2^2
 =\frac{E_\varphi(R)+s_\rho}{Q_\tau}
 =1+\frac{E_\varphi(R)+\varphi(R)}{Q_\tau}
 =1-\frac{\theta_R}{Q_\tau}.
\end{equation}
For $\tau$ small, $Q_\tau>\theta_R$. Define
\begin{equation}\label{eq:normalized-test}
 W_\tau:=\frac{v_\tau}{\sqrt{1-\theta_R/Q_\tau}}.
\end{equation}
Then \eqref{eq:exact-normalization} gives
$\norm{\nabla W_\tau}_2=1$.
For every fixed $\delta>0$ with $B_\delta(x_0)\Subset\Omega$,
we have $\rho<\delta$ eventually, and
\[
 \int_{\Omega\setminus B_\delta(x_0)}|\nabla W_\tau|^2\dd x
 =\frac1{Q_\tau-\theta_R}
 \int_{\Omega\setminus B_\delta(x_0)}|\nabla G|^2\dd x
 \longrightarrow0.
\]
Together with the unit total energy, this proves concentration at
$x_0$ in the sense of \eqref{eq:concentration-definition}.

\smallskip\noindent
\emph{Step 3: a strictly positive core contribution.}
For $|z|<R$, let $b_\tau(z):=\varphi(z)+\eta(x_0+\tau z)$.
From \eqref{eq:normalized-test},
\begin{equation}\label{eq:positive-square}
 W_\tau^2(x_0+\tau z)
 =\frac{(Q_\tau+b_\tau(z))^2}{Q_\tau-\theta_R}
 =Q_\tau+2b_\tau(z)+\theta_R+
 \frac{(b_\tau(z)+\theta_R)^2}{Q_\tau-\theta_R}.
\end{equation}
The last term is nonnegative everywhere. Since
$\varphi(0)=\eta(x_0)=0$ and $\theta_R\to1/(4\pi)$,
there exist fixed $s_0,m_0>0$ such that
\begin{equation}\label{eq:core-positive-lower}
 |b_\tau(z)+\theta_R|\ge m_0\qquad(|z|\le s_0)
\end{equation}
for all sufficiently small $\tau$. These constants do not depend
on $\tau$. We may assume $R>s_0$.

Consider the core integral
\[
 J_\tau^{\mathrm{core}}:=
 \int_{B_\rho(x_0)}\!\int_{B_\rho(x_0)}
 \frac{e^{\beta_*W_\tau^2(x)}e^{\beta_*W_\tau^2(y)}}{|x-y|^\alpha}
 \dd x\dd y.
\]
Changing variables $x=x_0+\tau z$, $y=x_0+\tau w$, and using
\eqref{eq:positive-square} gives
\begin{equation}\label{eq:core-exact}
 \begin{aligned}
 J_\tau^{\mathrm{core}}
 &=S_\tau\int_{B_R}\!\int_{B_R}
 \frac{f(z)f(w)}{|z-w|^\alpha}
 e^{2\beta_*[\eta(x_0+\tau z)+\eta(x_0+\tau w)]}\\
 &\hspace{16mm}\times
 \exp\!\left(\frac{\beta_*[(b_\tau(z)+\theta_R)^2+
 (b_\tau(w)+\theta_R)^2]}{Q_\tau-\theta_R}\right)\dd z\dd w,
 \end{aligned}
\end{equation}
where $f$ is the function in \eqref{eq:trial-profile} and
\[
 S_\tau:=\tau^{4-\alpha}
 e^{2\beta_*Q_\tau+2\beta_*\theta_R}.
\]
The final exponential factor in \eqref{eq:core-exact} is at least
one on $B_R\times B_R$. By \eqref{eq:core-positive-lower} and
$e^t\ge1+t$, it is at least
$1+\beta_*m_0^2/Q_\tau$ on $B_{s_0}\times B_{s_0}$.
Set the explicit positive constant
\[
 a_0:=\beta_*m_0^2
 \mathcal B_{\alpha,\R^2}
 (f\mathbf1_{B_{s_0}},f\mathbf1_{B_{s_0}})>0.
\]
Also $\sup_{|z|\le R}|\eta(x_0+\tau z)|\le C\rho$.
Using \eqref{eq:profile-tail} in \eqref{eq:core-exact}, we find
\begin{equation}\label{eq:core-lower}
 J_\tau^{\mathrm{core}}
 \ge S_\tau(1-C\rho)
 \left(1-CR^{-(4-\alpha)/2}+\frac{a_0}{Q_\tau}\right).
\end{equation}
All constants in this estimate are independent of sufficiently
small $\tau$; they may depend on $x_0$, $\Omega$, and $\alpha$.

\smallskip\noindent
\emph{Step 4: matching the leading constant and controlling errors.}
By \eqref{eq:test-scales}, the definition of $S_\tau$, and
\eqref{eq:critical-parameters},
\[
 S_\tau=e^{2\beta_*A+2\beta_*\theta_R}
 (\kappa_\alpha+R^{-2})^{(4-\alpha)/2}.
\]
Since $\kappa_\alpha^{(4-\alpha)/2}=2\pi^2/(2-\alpha)$,
\eqref{eq:profile-energy-defect} and \eqref{eq:K-alpha} yield
\begin{equation}\label{eq:prefactor-asymptotics}
 S_\tau=K_\alpha(x_0)\bigl(1+O(R^{-2})\bigr).
\end{equation}
Our choice $R=T^2$, $\rho=e^{-T}T^2$, and $Q_\tau\sim T/(2\pi)$
gives
\begin{equation}\label{eq:test-error-orders}
 \rho=o(Q_\tau^{-1}),\quad
 R^{-2}=o(Q_\tau^{-1}),\quad
 R^{-(4-\alpha)/2}=o(Q_\tau^{-1}),\quad
 \rho^{4-\alpha}=o(Q_\tau^{-1}).
\end{equation}
For example, $R^{-(4-\alpha)/2}=T^{-(4-\alpha)}$, and
$4-\alpha>2$. Thus every negative error in
\eqref{eq:core-lower} is of smaller order than $Q_\tau^{-1}$.
Combining \eqref{eq:core-lower}, \eqref{eq:prefactor-asymptotics},
and \eqref{eq:test-error-orders}, we obtain
\begin{equation}\label{eq:core-strict-gain}
 J_\tau^{\mathrm{core}}
 \ge K_\alpha(x_0)+\frac{K_\alpha(x_0)a_0}{Q_\tau}
       +o(Q_\tau^{-1}).
\end{equation}

\smallskip\noindent
\emph{Step 5: restoring the background and identifying the limit.}
The exponential product is at least one on all of
$\Omega\times\Omega$. Hence
\[
 \begin{aligned}
 \mathcal F_{\beta_*}(W_\tau)
 &\ge I_\alpha(\Omega)+J_\tau^{\mathrm{core}}
 -\int_{B_\rho(x_0)}\!\int_{B_\rho(x_0)}|x-y|^{-\alpha}\dd x\dd y\\
 &=I_\alpha(\Omega)+J_\tau^{\mathrm{core}}
 -\rho^{4-\alpha}\mathcal B_{\alpha,B_1}(1,1).
 \end{aligned}
\]
By \eqref{eq:core-strict-gain} and \eqref{eq:test-error-orders},
this is at least
\[
 I_\alpha(\Omega)+K_\alpha(x_0)
 +\frac{K_\alpha(x_0)a_0}{2Q_\tau}
\]
for $\tau$ small. Since $Q_\tau\sim T/(2\pi)$, this proves
\eqref{eq:test-strict} after choosing $c_{x_0}>0$ sufficiently
small. In particular, the liminf is at least
$I_\alpha(\Omega)+K_\alpha(x_0)$. Step~2 proved that $W_\tau$
concentrates at $x_0$, so Proposition~\ref{prop:concentration-upper}
provides the reverse limsup inequality. This establishes
\eqref{eq:test-limit}.
\end{proof}

\subsection{Completion of the concentration formula}

\begin{proof}[Proof of Theorem~\ref{thm:concentration}]
For $x_0\in\Omega$, Proposition~\ref{prop:concentration-upper}
gives an upper bound for the supremum in
\eqref{eq:concentration-level-definition}, and the sequence in
Proposition~\ref{prop:test} gives the matching lower bound through
\eqref{eq:test-limit}. This proves \eqref{eq:concentration-formula}
in the interior and supplies the asserted strict inequality along
the test sequence.

For $x_0\in\partial\Omega$, every concentrating sequence has
functional value tending to $I_\alpha(\Omega)$ by
Proposition~\ref{prop:concentration-upper}. Such sequences exist:
choose $x_n\in\Omega$ tending to $x_0$ and radii $\rho_n\downarrow0$
with $B_{\rho_n}(x_n)\Subset\Omega$. If
$\zeta\in C_c^\infty(B_1)$ satisfies $\norm{\nabla\zeta}_2=1$,
then
\[
 u_n(x):=\zeta\bigl((x-x_n)/\rho_n\bigr)
\]
extended by zero belongs to $\mathcal S(\Omega)$, has unit energy,
and concentrates at $x_0$. This proves
$\mathcal C_\alpha(x_0)=I_\alpha(\Omega)$ on the boundary.

Finally, $r_\Omega$ is continuous on the compact set
$\overline\Omega$, is positive in $\Omega$, and vanishes on its
boundary. Thus $K_\alpha$ attains its positive maximum at an
interior point $x_m$. By \eqref{eq:test-strict}, a sufficiently
small $\tau$ gives
\[
 \Lambda_0\ge\mathcal F_{\beta_*}(W_\tau)
 >I_\alpha(\Omega)+K_\alpha(x_m)
 =\max_{x\in\overline\Omega}\mathcal C_\alpha(x).
\]
This is \eqref{eq:global-gap}.
\end{proof}

\begin{Rem}\label{rem:scope-parameters}
For each fixed $0<\alpha<2$, the test sequence approaches its
concentration level from above. This does not make it a maximizing
sequence: \eqref{eq:global-gap} puts the actual supremum strictly
higher. Also, the background splitting relies on
$\beta_*<4\pi$, so it is not uniform as $\alpha\downarrow0$.
The present argument makes no assertion about compactness of
extremals when $\alpha$ varies. Such a parameter question is
different from the fixed-parameter attainment problem; compare
\cite{WangYang2021} for a local singular setting.
\end{Rem}

\section{Attainment and the Euler--Lagrange equation}\label{sec:attainment}

\subsection{Attainment and compactness of maximizing sequences}

\begin{Prop}[Existence and strong compactness]\label{prop:attainment}
For every $0<\beta\le\beta_*$, the supremum of
$\mathcal F_\beta$ on $\mathcal S(\Omega)$ is attained.
Every maximizing sequence with unit Dirichlet norm has a subsequence
converging strongly in $H_0^1(\Omega)$ to a maximizer with unit norm.
A maximizer can be chosen nonnegative.
\end{Prop}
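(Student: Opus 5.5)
The plan is to treat the subcritical case $0<\beta<\beta_*$ and the critical case $\beta=\beta_*$ separately. In both, a maximizing sequence is first replaced by one of unit norm, and the alternative in Corollary~\ref{cor:nonlocal-alternative} is then applied.

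\emph{Reduction to unit norm.} Given a maximizing sequence $(u_n)\subset\mathcal S(\Omega)$, I replace $u_n$ by $|u_n|/\norm{\nabla u_n}_2$ when $u_n\neq0$. This does not decrease $\mathcal F_\beta$, because $\mathcal F_\beta$ depends only on $u^2$, $|\nabla|u||=|\nabla u|$ a.e., and $t\mapsto e^{\beta t^2}$ is increasing. Only finitely many $u_n$ can vanish, since $\mathcal F_\beta(0)=I_\alpha(\Omega)<\sup$ by the argument of \eqref{eq:strict-background}. So we may assume $\norm{\nabla u_n}_2=1$ and, whenever convenient, $u_n\ge0$. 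Along a subsequence, $u_n\weakto u$ weakly and a.e.

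\emph{Subcritical case.} For $\beta<\beta_*$, choose $s>1$ with $s q_\alpha\beta\le4\pi$. Lemma~\ref{lem:mt} bounds $e^{\beta u_n^2}$ uniformly in $L^{sq_\alpha}$, and $u_n\to u$ in measure by compact embedding. Vitali's theorem then gives $e^{\beta u_n^2}\to e^{\beta u^2}$ in $L^{q_\alpha}$, and the bilinear HLS estimate from the proof of Lemma~\ref{lem:transfer} gives $\mathcal F_\beta(u_n)\to\mathcal F_\beta(u)$.

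\emph{Critical case.} For $\beta=\beta_*$, Corollary~\ref{cor:nonlocal-alternative} gives either concentration at some $x_0\in\overline\Omega$ or \eqref{eq:nonlocal-density-convergence}. Concentration would give $\Lambda_0=\limsup\mathcal F_{\beta_*}(u_n)\le\mathcal C_\alpha(x_0)\le\max\mathcal C_\alpha$, contradicting \eqref{eq:global-gap}. Hence $\mathcal F_{\beta_*}(u_n)\to\mathcal F_{\beta_*}(u)$ in this case too. In both cases $u\in\mathcal S(\Omega)$ and $\mathcal F_\beta(u)=\sup$, so the supremum is attained.

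\emph{Strong convergence.} This is the main obstacle: passing from convergence of the functional to convergence in norm. First, $u\neq0$, since otherwise the supremum would equal $I_\alpha(\Omega)$, contradicting strict positivity as in \eqref{eq:strict-background}. Next, suppose $t:=\norm{\nabla u}_2<1$. Then $u/t\in\mathcal S(\Omega)$, and $e^{\beta u^2/t^2}>e^{\beta u^2}$ on the positive-measure set $\{u\neq0\}$. Since the kernel is positive, $\mathcal F_\beta(u/t)>\mathcal F_\beta(u)=\sup$, which is a contradiction. Therefore $\norm{\nabla u}_2=1=\lim\norm{\nabla u_n}_2$, and weak convergence together with convergence of norms in the Hilbert space $H_0^1(\Omega)$ yields strong convergence.

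This argument applies to any maximizing sequence with unit norm, not only to the normalized one constructed above. In that general setting I work with $|u_n|$ only where needed for the alternative; the alternative in Lemma~\ref{lem:local-alternative} applies to arbitrary sequences anyway, so the sign plays no role. Finally, nonnegativity of some maximizer follows because $|u|$ is again a maximizer, with the same norm and the same value of $\mathcal F_\beta$.
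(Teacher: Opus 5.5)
Your proposal is correct and follows the paper's proof essentially step by step: the same uniform-integrability/Vitali and HLS argument in the subcritical case, the same exclusion of concentration via \eqref{eq:global-gap} in the critical case, and the same scaling argument forcing $\norm{\nabla u}_2=1$ and hence strong convergence. The only difference is that you pass to $|u_n|$ at the outset, whereas the paper takes absolute values only at the end, and this changes nothing.
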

\begin{proof}
Consider first $\beta=\beta_*$. Take a maximizing sequence
$(u_n)\subset\mathcal S(\Omega)$ with $\norm{\nabla u_n}_2=1$;
such a choice is possible because scaling a nonzero function up
to unit norm increases the functional. Extract a subsequence as in
Corollary~\ref{cor:nonlocal-alternative}. If it concentrated at
$x_0\in\overline\Omega$, then
\[
 \Lambda_0=\lim_n\mathcal F_{\beta_*}(u_n)
 \le\mathcal C_\alpha(x_0)
 \le\max_{\overline\Omega}\mathcal C_\alpha<\Lambda_0
\]
by \eqref{eq:global-gap}, a contradiction. Thus
\eqref{eq:nonlocal-density-convergence} holds for its weak limit $u$,
and $\mathcal F_{\beta_*}(u)=\Lambda_0$.
By \eqref{eq:strict-background}, $u\ne0$. If
$\norm{\nabla u}_2<1$, the function $u/\norm{\nabla u}_2$ has
strictly larger functional value: its exponential density is no
smaller everywhere and is strictly larger on $\{u\ne0\}$, a set
of positive measure. This is impossible. Hence
$\norm{\nabla u}_2=1$. Weak convergence and equality of the limiting
Hilbert norms imply $u_n\to u$ strongly in $H_0^1(\Omega)$.
The argument applies to every normalized maximizing sequence.

If $0<\beta<\beta_*$, let $(u_n)$ be a normalized maximizing
sequence and extract a weakly and almost everywhere convergent
subsequence with limit $u$. Choose $p>q_\alpha$ such that
$p\beta<4\pi$. By \eqref{eq:mt},
$\sup_n\norm{e^{\beta u_n^2}}_p<\infty$. For every measurable
$E\subset\Omega$,
\[
 \int_E e^{q_\alpha\beta u_n^2}\dd x
 \le C|E|^{1-q_\alpha/p}.
\]
Thus the $q_\alpha$th powers are uniformly integrable. Vitali's
theorem gives $e^{\beta u_n^2}\to e^{\beta u^2}$ in
$L^{q_\alpha}(\Omega)$, and the HLS difference estimate in
Lemma~\ref{lem:transfer} gives convergence of the functional.
The supremum is strictly larger than $I_\alpha(\Omega)$, so the
same scaling argument proves $u\ne0$ and
$\norm{\nabla u}_2=1$, and yields strong convergence.
Finally, replacing a maximizer by its absolute value preserves its
Dirichlet norm and its functional value.
\end{proof}

\subsection{Differentiation at an extremal and regularity}

The next lemma supplies the differentiability needed for the
Lagrange-multiplier argument, including at the critical exponent.
Its bounds are local around a fixed Sobolev function, not uniform
on the entire unit ball.
\begin{Lem}[Differentiability of the exponential functional]
\label{lem:differentiability}
For every $a>0$ and $1\le p<\infty$, the map
$N_a:H_0^1(\Omega)\to L^p(\Omega)$,
$N_a(u)=e^{a u^2}$, is continuously Fr\'echet differentiable, with
$DN_a(u)[v]=2aue^{a u^2}v$.
Consequently $\mathcal F_a\in C^1(H_0^1(\Omega),\R)$ and
\begin{equation}\label{eq:Frechet-derivative}
 D\mathcal F_a(u)[v]=4a\int_\Omega
 \mathcal I_{\alpha,\Omega}[e^{a u^2}](x)
 u(x)e^{a u^2(x)}v(x)\dd x.
\end{equation}
\end{Lem}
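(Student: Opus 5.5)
The plan is to prove Fréchet differentiability of $N_a$ at a fixed $u$ with the local exponential bounds of Lemma~\ref{lem:fixed-function}, and then to pass to $\mathcal F_a$ through the bilinear HLS estimate \eqref{eq:hls}.

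\emph{Remainder estimate.} Fix $u\in H_0^1(\Omega)$ and $h$ with $\norm{\nabla h}_2\le d$, where $d=d(u,b)$ comes from Lemma~\ref{lem:fixed-function} with a large exponent $b$ to be chosen. Taylor's formula with integral remainder gives
\[
 e^{a(u+h)^2}-e^{au^2}-2aue^{au^2}h
 =h^2\int_0^1(1-t)\,g''(u+th)\dd t,
 \qquad g(s)=e^{as^2}.
\]
Since $g''(s)=(2a+4a^2s^2)e^{as^2}\le Ce^{2as^2}$, the integrand is bounded by $Ce^{2a(u+th)^2}$. By Hölder's inequality,
\[
 \norm{\text{remainder}}_p
 \le C\norm{h}_{3p}^2\,
 \sup_{t\in[0,1]}\norm{e^{2a(u+th)^2}}_{3p/(3p-2)\cdot p}.
\]
Equivalently, split $L^{p}$ into $L^{2p'}$ for $h^2$ and an $L^{r}$ norm of the exponential. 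Each segment point $u+th$ lies in the $d$-ball around $u$, so the exponential factor is bounded by Lemma~\ref{lem:fixed-function} once $b$ is large enough. The Sobolev embedding $H_0^1\hookrightarrow L^s$ for every $s<\infty$ then gives a remainder of size $O(\norm{\nabla h}_2^2)=o(\norm{\nabla h}_2)$. The same Hölder argument shows that the candidate derivative $v\mapsto 2aue^{au^2}v$ is a bounded operator $H_0^1\to L^p$.

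\emph{Continuity of $DN_a$.} This is the step I expect to need the most care. Take $u_k\to u$ in $H_0^1$. Then
\[
 \norm{(u_ke^{au_k^2}-ue^{au^2})v}_p
 \le\norm{u_ke^{au_k^2}-ue^{au^2}}_{2p}\,\norm{v}_{2p}.
\]
To show the first factor tends to zero, pass to an almost everywhere convergent subsequence. Uniform integrability follows because, for $k$ large, $u_k$ lies in the $d$-ball of $u$: Lemma~\ref{lem:fixed-function} bounds $e^{b u_k^2}$ in $L^1$ for a larger $b$, and the Sobolev bounds control $u_k$. Vitali's theorem then gives convergence, and the subsequence principle extends it to the whole sequence. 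This yields $N_a\in C^1$.

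\emph{Passing to $\mathcal F_a$.} Write $\mathcal F_a(u)=\mathcal B_{\alpha,\Omega}(N_a(u),N_a(u))$. The map $\mathcal B_{\alpha,\Omega}$ is a bounded symmetric bilinear form on $L^{q_\alpha}(\Omega)$ by \eqref{eq:hls} and zero extension. Apply the chain rule with $p=q_\alpha$. This gives $D\mathcal F_a(u)[v]=2\mathcal B_{\alpha,\Omega}(N_a(u),DN_a(u)[v])$, which by Fubini's theorem equals \eqref{eq:Frechet-derivative}. Continuity of $D\mathcal F_a$ follows from the continuity of $N_a$ and $DN_a$ together with boundedness of $\mathcal B_{\alpha,\Omega}$.
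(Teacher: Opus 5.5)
Your proposal is correct. The remainder estimate (Taylor with integral remainder, Lemma~\ref{lem:fixed-function} with a large exponent, Sobolev embedding) and the chain-rule passage to $\mathcal F_a$ through the bounded symmetric form $\mathcal B_{\alpha,\Omega}$ on $L^{q_\alpha}(\Omega)$ match the paper.

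The genuine difference is how you prove continuity of $DN_a$:
\begin{itemize}
\item The paper applies the integral mean value theorem to $H'(t)=2ate^{at^2}$. It reuses the uniform bound on $\norm{H''(w)}_{4p}$ over the neighborhood, and H\"older with exponents $(4p,4p,2p)$ gives $\norm{[H'(u+v)-H'(u)]z}_p\le C\norm{\nabla v}_2\norm{\nabla z}_2$. So $DN_a$ is locally Lipschitz in operator norm, with no subsequence extraction.
\item You prove $u_ke^{au_k^2}\to ue^{au^2}$ in $L^{2p}(\Omega)$ via almost everywhere convergence along subsequences, uniform integrability from Lemma~\ref{lem:fixed-function}, Vitali's theorem, and the subsequence principle.
\end{itemize}
Both are valid. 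The paper's route is shorter once the $H''$ bound is in hand and gives a quantitative modulus of continuity. Yours is the standard qualitative Nemytskii-continuity argument and needs nothing beyond what is already available.

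One small repair: your H\"older exponent $3p/(3p-2)\cdot p$ for the exponential factor is wrong when $p>1$. With it the reciprocals sum to $(5p-2)/(3p^2)>1/p$, which is the wrong direction for H\"older. Since $\norm{h}_{3p}^2=\norm{h^2}_{3p/2}$, you should take the exponential factor in $L^{3p}$. Lemma~\ref{lem:fixed-function} controls that norm equally well, so the argument is unaffected.
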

\begin{proof}
Fix $u\in H_0^1(\Omega)$ and write $H(t)=e^{a t^2}$.
Then
\[
 H'(t)=2at e^{a t^2},\qquad
 H''(t)=(2a+4a^2t^2)e^{a t^2}.
\]
For fixed $m\ge0$ and $b>0$, we have
\[
 (1+|t|^m)e^{b t^2}\le C_{m,b}e^{(b+1)t^2}
 \qquad(t\in\R).
\]
Lemma~\ref{lem:fixed-function}, applied with a sufficiently large
coefficient, therefore bounds $\norm{H''(w)}_{4p}$ uniformly
for $w$ in a sufficiently small $H_0^1$ neighborhood of $u$.
Taylor's formula and H\"older's inequality therefore give, for
$\norm{\nabla v}_2$ small,
\[
 \begin{aligned}
 \norm{N_a(u+v)-N_a(u)-H'(u)v}_p
 &\le\int_0^1(1-t)
   \norm{H''(u+tv)}_{2p}\norm{v}_{4p}^2\dd t\\
 &\le C\norm{\nabla v}_2^2.
 \end{aligned}
\]
Here we used the finite-exponent Sobolev embeddings in dimension
two. Similarly, for any $z\in H_0^1(\Omega)$,
\[
 \begin{aligned}
 \norm{[H'(u+v)-H'(u)]z}_p
 &\le\int_0^1\norm{H''(u+tv)}_{4p}
       \norm{v}_{4p}\norm{z}_{2p}\dd t\\
 &\le C\norm{\nabla v}_2\norm{\nabla z}_2.
 \end{aligned}
\]
This proves differentiability and continuity of the derivative.
The continuous bilinear form in \eqref{eq:hls}, composed with
$N_a$ for $p=q_\alpha$, now gives $\mathcal F_a\in C^1$.
Symmetry of the bilinear form yields
\eqref{eq:Frechet-derivative}.
\end{proof}

\begin{Prop}[Euler--Lagrange equation and positivity]
\label{prop:EL-regularity}
Let $0<\beta\le\beta_*$ and let $u\ge0$ be a maximizer from
Proposition~\ref{prop:attainment}. Put
\[
 h:=e^{\beta u^2},\qquad
 V:=\mathcal I_{\alpha,\Omega}[h],\qquad
 \lambda:=\int_\Omega Vu^2h\dd x.
\]
Then $0<\lambda<\infty$, and $u$ satisfies
\begin{equation}\label{eq:EL}
 \begin{cases}
 -\Delta u=\lambda^{-1}Vu e^{\beta u^2}&\text{in }\Omega,\\
\quad\ \  u=0&\text{on }\partial\Omega.
 \end{cases}
\end{equation}
Moreover, $u\in W^{2,p}(\Omega)$ for every finite $p>1$,
$u\in C^{1,\gamma}(\overline\Omega)$ for every $0<\gamma<1$,
and $u>0$ in $\Omega$.
\end{Prop}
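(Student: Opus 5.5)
The plan is to derive \eqref{eq:EL} from a Lagrange multiplier argument, then bootstrap regularity with elliptic $L^p$ theory, and finally get positivity from the strong maximum principle.

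\emph{Finiteness and positivity of $\lambda$.} Since $\norm{\nabla u}_2=1$, Lemma~\ref{lem:fixed-function} gives $h\in L^s(\Omega)$ for every finite $s$. Bound $V$ by $\norm{h}_s$ times $\sup_x\norm{|x-\cdot|^{-\alpha}}_{L^{s'}(\Omega)}$. This is finite once $\alpha s'<2$, so $V\in L^\infty(\Omega)$, with $V\le M_{\alpha,\Omega}$-type bounds. Hence $\lambda<\infty$ by Hölder and Sobolev. Also $V>0$ everywhere, $h\ge1$, and $u\not\equiv0$, so $\lambda>0$.

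\emph{Lagrange multiplier.} By Lemma~\ref{lem:differentiability}, $\mathcal F_\beta\in C^1$. The constraint $g(u)=\norm{\nabla u}_2^2=1$ has derivative $2\int\nabla u\cdot\nabla v$, which is nonzero at $u$. The maximizer lies on the sphere by Proposition~\ref{prop:attainment}. Alternatively, one can avoid constraint qualification: for any $v$, the function $t\mapsto\mathcal F_\beta\bigl((u+tv)/\norm{\nabla(u+tv)}_2\bigr)$ has a maximum at $t=0$. Either route yields $\mu$ with
\[
4\beta\int_\Omega Vuhv\dd x=2\mu\int_\Omega\nabla u\cdot\nabla v\dd x
\]
for all $v\in H_0^1$. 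Testing with $v=u$ gives $4\beta\lambda=2\mu$, so $\mu=2\beta\lambda>0$. Dividing through gives $\int\nabla u\cdot\nabla v=\lambda^{-1}\int Vuhv$, which is the weak form of \eqref{eq:EL}.

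\emph{Regularity and positivity.} Set $g:=\lambda^{-1}Vuh$. We have $V\in L^\infty$, $u\in L^s$ for all $s<\infty$, and $h\in L^s$ for all $s<\infty$. Hence $g\in L^p$ for every finite $p$, and $u$ is the weak $H_0^1$ solution of $-\Delta u=g$ on a smooth bounded domain. Calderón--Zygmund $L^p$ theory (Agmon--Douglis--Nirenberg / Gilbarg--Trudinger) then gives $u\in W^{2,p}(\Omega)$, with uniqueness of the weak solution identifying it. Morrey's embedding gives $C^{1,\gamma}(\overline\Omega)$ for every $\gamma<1$.

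For positivity, note $g\ge0$ since $u\ge0$, so $-\Delta u\ge0$ and $u$ is a continuous superharmonic function. The strong maximum principle applies in $W^{2,p}_{\mathrm{loc}}$ with $p>2$ (or via the weak Harnack inequality). Because $\Omega$ is connected and $u\not\equiv0$, it gives $u>0$ in $\Omega$.

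\emph{Main obstacle.} The main obstacle is the $L^\infty$ bound for $V$ together with integrability of $h$ at the critical $\beta=\beta_*$. This holds only because $u$ is a single fixed function, so Lemma~\ref{lem:fixed-function} is essential rather than any uniform Trudinger--Moser bound. The remaining steps are standard.
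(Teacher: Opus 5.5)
Your proposal is correct and follows the paper's proof essentially step by step. The steps coincide: the H\"older bound on $V$ with $\alpha t'<2$ via Lemma~\ref{lem:fixed-function}, the Lagrange multiplier from Lemma~\ref{lem:differentiability} identified by testing with $v=u$, global $L^p$ Dirichlet regularity with Morrey's embedding, and the strong maximum principle on the connected domain. Your extras, the normalized-curve alternative to the multiplier theorem and the uniqueness remark identifying the $W^{2,p}$ solution with the weak one, are harmless refinements of the same argument.
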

\begin{proof}
Lemma~\ref{lem:fixed-function} implies that $h$ and $u^k h$ belong
to every finite $L^p(\Omega)$ for each fixed nonnegative integer
$k$. Choose $t>2/(2-\alpha)$ and let $t'=t/(t-1)$.
Since $\alpha t'<2$, H\"older's inequality and polar integration
give
\[
 \sup_{x\in\Omega}V(x)
 \le\norm{h}_t
 \sup_{x\in\Omega}
 \left(\int_\Omega|x-y|^{-\alpha t'}\dd y\right)^{1/t'}<\infty.
\]
Thus $\lambda<\infty$. Since $u\neq 0$, $h\ge1$, and the kernel is
positive, $\lambda>0$.

By Lemma~\ref{lem:differentiability}, the usual
Lagrange-multiplier theorem on the smooth constraint
$\norm{\nabla u}_2^2=1$ applies. There exists $\ell\in\R$ such that
\[
 4\beta\int_\Omega Vu h\,v\dd x
 =2\ell\int_\Omega\nabla u\cdot\nabla v\dd x
 \qquad(v\in H_0^1(\Omega)).
\]
Testing with $v=u$ gives $\ell=2\beta\lambda$. Division yields
the weak formulation of \eqref{eq:EL}.

The right-hand side of \eqref{eq:EL} belongs to $L^p(\Omega)$
for every finite $p>1$. Global Dirichlet regularity therefore gives
$u\in W^{2,p}(\Omega)$ for all such $p$; see
\cite[Chapter~9]{GilbargTrudinger2001}. The Sobolev--Morrey
embeddings yield the stated $C^{1,\gamma}$ regularity.
Finally, $u\ge0$ is nonzero and $-\Delta u\ge0$, so the strong
maximum principle on the connected domain $\Omega$ gives $u>0$.
\end{proof}

\begin{proof}[Proof of Theorem~\ref{thm:main}]
The sharp boundedness threshold is Proposition~\ref{prop:sharp}.
Proposition~\ref{prop:attainment} proves attainment for
$0<\beta\le\beta_*$, and Proposition~\ref{prop:EL-regularity}
gives the Euler--Lagrange equation, positivity, and regularity.
\end{proof}

\par\medskip
\noindent\textbf{Acknowledgements.}\enspace
This work was supported by the Beijing University of Agriculture
Young Teachers Scientific Research Innovation Enhancement Program
under Grant No.~QJKC-2026010.



\par
\end{document}